\documentclass[10pt,journal,compsoc]{IEEEtran}

\usepackage{amsmath}
\usepackage{amssymb}
\usepackage[dvipdfmx]{graphicx}
\usepackage{color}
\usepackage{dsfont}

\usepackage{supertabular}
\usepackage{comment}

\ifCLASSOPTIONcompsoc

\usepackage{cite}
\usepackage{url}

\usepackage{theorem}
\theorembodyfont{\rmfamily}

\newtheorem{definition}{Definition}

\newtheorem{theorem}{Theorem}
\newtheorem{lemma}{Lemma}

\newcommand{\bm}[1]{\boldsymbol{#1}}

\DeclareMathOperator{\sfsin}{\mathsf{sin}}
\DeclareMathOperator{\sfcos}{\mathsf{cos}}
\DeclareMathOperator{\sftan}{\mathsf{tan}}

\DeclareMathOperator{\sfdet}{\mathsf{det}}
\DeclareMathOperator{\sfadj}{\mathsf{adj}}

\DeclareMathOperator{\sftr}{\mathsf{tr}}

\newcommand{\mat}[1]{\left[\: \begin{matrix} #1 \end{matrix} \:\right]}

\newcommand{\spliteq}[1]{\begin{split} #1 \end{split}}
\newcommand{\simode}[1]{\left\{\:  \begin{aligned} #1 \end{aligned} \right.}

\begin{document}

\title{
Voltage Stability Kernel: A Cofactor Theory of Voltage Stability in Lossy Power Systems
}

\author{Takayuki Ishizaki, Jigen Koizumi, Hiroo Yashiba, Boqiang Sun$^1$
\thanks{$^1$ Institute of Science Tokyo, 2-12-1, Ookayama, Meguro, Tokyo, 152-8552, Japan}
}

\IEEEtitleabstractindextext{%
\begin{abstract}
This paper introduces the voltage stability kernel (VSK), a cofactor-based bus-wise representation of voltage stability in lossy power systems.
The VSK is defined as the vector of principal cofactors of the voltage stability Laplacian (VSL), a reduced Jacobian that retains voltage source internal angles while eliminating the other variables.
We show that the VSK constitutes the left kernel of the VSL, which is typically nonsymmetric in lossy power systems.
We also define the voltage stability margin (VSM) as the sum of all VSK components and show that it is equal to the product of all eigenvalues of the VSL except the trivial zero eigenvalue due to phase-shift symmetry.
Thus, the VSK provides a bus-wise decomposition of the VSM.
Furthermore, the VSK offers an algebraic interpretation of CPF calculations with a fixed slack bus.
The singularity of the Jacobian in CPF calculations obtained by deleting the slack-bus row and column is characterized by the vanishing of the VSK component selected by the slack bus. 
In contrast, the static bifurcation is characterized by the vanishing of the VSM.
Since these two conditions are generally different, our theory explains why a CPF nose point does not necessarily correspond to a static bifurcation in lossy cases.
\end{abstract}

\begin{IEEEkeywords}
Voltage stability, voltage stability kernel, continuation power flow, static bifurcation, lossy power systems.
\end{IEEEkeywords}
}

\maketitle

\section{Introduction}

Voltage stability is a fundamental issue in power system analysis and operation.
It is closely related to the solvability of power flow equations, the disappearance of feasible equilibria, and the loss of local stability through static bifurcations.
Classical studies have shown that voltage collapse can often be interpreted as a saddle-node bifurcation, where a stable equilibrium and an unstable equilibrium coalesce and disappear.
This viewpoint provides a mathematical basis for voltage stability analysis \cite{DobsonChiang1989,ChiangDobsonThomas1990,Kundur1994,VanCutsemVournas1998}.

Continuation power flow (CPF) is a widely used computational method for estimating voltage stability limits.
CPF tracks an equilibrium branch under a specified loading direction and detects the nose points of the corresponding curves \cite{AjjarapuChristy1992}.
In lossless power systems, these nose points are naturally associated with static bifurcations of the underlying dynamics.
In lossy systems, however, this correspondence is unclear.
Indeed, power-flow equations can have multiple solutions, and some low-voltage or nonstandard solutions may be stable under certain operating conditions  \cite{TamuraMoriIwamoto1983,NguyenTuritsyn2014}.
These observations suggest that a nose point of a CPF curve does not mathematically characterize a static bifurcation in lossy power systems on its own.
Thus, an algebraic criterion is needed to distinguish CPF nose points from static bifurcations of the underlying dynamics.

A key difficulty is that the relevant Jacobian for voltage instability is generally nonsymmetric in lossy power systems.
In lossless or symmetric cases, phase-shift symmetry yields a Laplacian-like matrix whose left and right kernels coincide.
However, with transmission losses, the reduced Jacobian retains a uniform phase-shift direction as the right kernel, while the left kernel becomes nonuniform.
Left eigenvectors have been used in voltage stability analysis.
For example, they represent normal directions of stability boundaries \cite{dobson1992observations}.
Furthermore, modal analysis evaluates bus participation in critical voltage modes using the left and right eigenvectors associated with small nontrivial eigenvalues of a reduced Jacobian \cite{gao1992voltage}.
In contrast, this paper focuses on the algebraic structure associated with the trivial zero eigenvalue arising from phase-shift symmetry and relates it to nontrivial static degeneracy.

This paper presents a cofactor theory of voltage stability in lossy power systems.
Starting from a standard differential-algebraic equation (DAE) model, we create a reduced Jacobian that retains the voltage source internal angles.
We call this reduced Jacobian the voltage stability Laplacian (VSL) because, despite being generally nonsymmetric, it preserves the phase-shift symmetry of the right kernel.
The reduction is based on the idea that static bifurcations can be characterized by stationary power flow variables after eliminating locally solvable internal and algebraic variables.
This reduction procedure essentially relies on the Schur complement, which preserves Laplacian-like network structures  \cite{dorfler2013kron}.

The cofactor viewpoint is inspired by classical graph theory.
For an undirected graph Laplacian, Kirchhoff’s matrix-tree theorem states that the weighted sum of spanning trees equals any principal cofactor \cite{Biggs1993}.
A nonsymmetric generalization is also shown in \cite{Chaiken1982}.
In this paper, we apply the cofactor principle to the VSL, which is obtained via the Schur complement of the power system Jacobian.
Thus, the VSL can be viewed as a voltage-stability analogue of graph-theoretic cofactor invariants.

The main contribution of this paper is introducing the voltage stability kernel (VSK).
The VSK is defined as the vector of principal cofactors of the VSL.
We prove that this vector constitutes the left kernel of the VSL.
This result provides a cofactor-theoretic counterpart to the standard Laplacian kernel property, which is particularly significant in lossy power systems where the VSL is nonsymmetric and the left and right kernels differ.

We also define the voltage stability margin (VSM) as the sum of all VSK components.
We prove that the VSM is equal to the product of all eigenvalues of the VSL except for the trivial zero eigenvalue associated with a uniform phase shift.
Thus, the VSK provides a bus-wise decomposition of the system-wide VSM.
In symmetric or lossless cases, all VSK components are identical, meaning that the bus-wise distinction disappears.
In lossy power systems, however, the VSK components can be nonuniform and can even have different signs.

Finally, we introduce an implication for CPF calculations through the VSK.
In a CPF calculation with a fixed slack bus, fixing the phase reference removes one phase degree of freedom.
The singularity of the Jacobian obtained by deleting the slack-bus row and column is characterized by the vanishing of the VSK component selected by the slack bus.
In contrast, the static bifurcation is characterized by the vanishing of the VSM.
Thus, our cofactor theory shows that these two points coincide in lossless systems but can generally differ in lossy systems.

The remainder of this paper is organized as follows.
Section~2 formulates the power system model and derives the reduced Jacobian used for static bifurcation analysis.
Section~3 develops the VSL, VSK, and VSM theory, and interprets CPF calculations through the resulting cofactor structure.
Section~4 presents numerical examples.
Section~5 concludes the paper.


\section{Problem Formulation}

This section formulates the DAE power system model and the static bifurcation problem considered in this paper.
We further explain how the internal generator states and bus voltage magnitudes can be reduced from the Jacobian.
Throughout the paper, we denote 
the set of real numbers by $\mathbb{R}$, 
the one-dimensional torus by $\mathbb{S}$, 
the $n$-dimensional all-ones vector by $\mathds{1}_n$, 
the $n$-dimensional identity matrix by $I_n$,  and
the cardinality of a set $\mathds{N}$ by $|\mathds{N}|$.

\subsection{Power System Model}

\subsubsection{Transmission Network Model}\label{sec:DAEmodel}

Let $\mathds{N}$ denote the label set of buses. 
The network admittance matrix is denoted by
\begin{equation}\label{eq:adY}
\bm{Y} = G + \bm{j}B
\end{equation}
where $G\in \mathbb{R}^{|\mathds{N}| \times |\mathds{N}|}$ and $B\in \mathbb{R}^{|\mathds{N}| \times |\mathds{N}|}$ are the conductance and susceptance matrices, respectively.
The transmission network is said to be ``lossless" if $G$ is zero, and ``lossy" otherwise.
The complex voltage phasor at bus $i$ is given by
\[
\bm{V}_i = e^{\rho_i + \bm{j}\theta_i},
\]
where $\rho_i\in \mathbb{R}$ and $\theta_i \in \mathbb{S}$ denote the logarithmic voltage magnitude and phase angle, respectively.
Note that 
\[
|\bm{V}_i|=e^{\rho_i}, \quad 
\angle \bm{V}_i =\theta_i.
\]
The active and reactive power injections at bus $i$, i.e., the power balance equations, are given by
\begin{equation}\label{eq:PQ_def}
\simode{
P_i
&=
\sum_{j=1}^{|\mathds{N}|}
e^{\rho_i + \rho_j }
\left\{
G_{ij}\sfcos(\theta_i \!-\! \theta_j)
+
B_{ij}\sfsin(\theta_i\!-\!\theta_j)
\right\}
\\
Q_i
&=
\sum_{j=1}^{|\mathds{N}|}
e^{\rho_i + \rho_j }
\left\{
G_{ij}\sfsin(\theta_i\!-\!\theta_j)
-
B_{ij}\sfcos(\theta_i\!-\!\theta_j)
\right\}
}
\end{equation}
where $G_{ij}$ and $B_{ij}$ denote the $(i,j)$-elements of $G$ and $B$, respectively.
We represent the bus variables as
\[
v=(\theta,\rho),\quad
w=(P,Q),
\]
where the symbols without the subscript $i$ represent the vectors consisting of all corresponding symbols.
Then, the power balance equation in \eqref{eq:PQ_def} can be simply written as
\begin{equation}\label{eq:PQ_def2}
w= g(v).
\end{equation}
In the following, we denote a stationary power flow distribution by
\[
\varpi^{\star}:=(v^{\star},w^{\star}),
\]
which satisfies
\[
w^{\star}= g(v^{\star}).
\]

\subsubsection{Synchronous Generator Model}

For the sake of simplicity, this paper uses a two-axis synchronous generator model for its discussion. 
Note that the same conclusions can be obtained using either the more detailed Park model or the simplified classical model for the synchronous generator \cite{nishino2025equilibrium,NishinoOnishiIshizaki2025}.

Consider a synchronous generator at bus $i$.
Let $E_{{\rm q}i}\in \mathbb{R}$ denote the field-winding flux linkage, 
$E_{{\rm d}i}\in \mathbb{R}$ denote the damper-winding flux linkage, 
$\delta_i\in \mathbb{S}$ denote the rotor angle relative to the frame rotating at the system angular frequency $\omega_0$, and 
$\omega_i \in \mathbb{R}$ be the angular frequency deviation relative to $\omega_0$.
Then, the generator dynamics is given as
\begin{subequations}\label{eq:2axis_model}
\begin{equation}\label{eq:2axis}
\simode{
\dot{\delta}_i &= \omega_0 \omega_i \\
M_i  \dot{ \omega}_i &= -D_i \omega_i - P_i + P^{\star}_{\mathrm{m}i} \\
\tau_{\mathrm{d}i} \dot{E}_{\mathrm{q}i} &= - E_{\mathrm{q}i}
 - (X_{\mathrm{d}i} - X'_{\mathrm{d}i}) I_{\mathrm{d}i} + V^{\star}_{\mathrm{fd}i} \\
\tau_{\mathrm{q}i} \dot{E}_{\mathrm{d}i} &= - E_{\mathrm{d}i}  + (X_{\mathrm{q}i} - X'_{\mathrm{q}i}) I_{\mathrm{q}i}
}
\end{equation}
where
$M_i$ is the inertia constant,
$D_i$ is the damping coefficient, 
$\tau_{{\rm d}i} $ and $\tau_{{\rm q}i} $ are the time constants of the flux linkage dynamics,
$X_{{\rm d}i}$ and $X_{{\rm q}i} $ are the d-axis and q-axis synchronous reactances,
$X_{{\rm d}i}'$ and $X_{{\rm q}i}'$ are the d-axis and q-axis transient reactances,
$P_{{\rm m}i}^{\star}$ is the mechanical input, and 
$V_{{\rm fd}i}^{\star}$ is the field voltage.
The currents $I_{{\rm d}i}$ and $I_{{\rm q}i}$ flowing into the bus along the d-axis and q-axis are given by 
\begin{equation}\label{eq:2axis_output_I}
    I_{\mathrm{d}i} = \frac{1}{X'_{\mathrm{d}i}} ( E_{\mathrm{q}i} - V_{\mathrm{q}i} ), \quad
    I_{\mathrm{q}i} = \frac{1}{X'_{\mathrm{q}i}} ( V_{\mathrm{d}i} - E_{\mathrm{d}i} )
\end{equation}
where $V_{\mathrm{d}i}$ and $V_{\mathrm{q}i}$ are defined as
\[
V_{{\rm d}i} :=
e^{\rho_i}  \sfsin (\delta_i - \theta_i ) , \quad
V_{{\rm q}i} :=
e^{\rho_i} \sfcos (\delta_i - \theta_i ) .
\]
The active power and the reactive power outputs of the synchronous generator are given as
\begin{equation}\label{eq:PiQis}
P_i  =  V_{{\rm q}i} I_{{\rm q}i} +
V_{{\rm d}i} I_{{\rm d}i} ,\quad
Q_i  =  
V_{{\rm q}i} I_{{\rm d}i} -
V_{{\rm d}i} I_{{\rm q}i}.
\end{equation}
\end{subequations}

Choosing the bus variables $(v_i,w_i)$ as the input and output for the connection to the bus, \eqref{eq:2axis_model} can be formally expressed as
\begin{equation}\label{eq:formal_model}
\simode{
\dot{x}_i &= f_i (x_i, v_i; u_i^{\star}) \\
w_i &= h_i (x_i, v_i)
}
\end{equation}
where $x_i$ is the state variable, and $u_i^{\star}$ is a constant input representing the mechanical power and field voltage.  

It is known that the stationary state $x_i^{\star}$ and the constant input $u_i^{\star}$ such that
\[
0 = f_i (x_i^{\star}, v_i^{\star}; u_i^{\star}) 
,\quad
w_i^{\star} = h_i (x_i^{\star}; v_i^{\star})
\]
are uniquely determined for a given stationary power flow $\varpi_{i}^{\star}$ at bus $i$. 
In particular, the phase difference $\delta^{\star}_i - \theta^{\star}_i$ between the generator and bus at the stationary power flow is uniquely determined as
\begin{equation}\label{eq:def_phi}
\phi_i(\nu_i^{\star}) := \sftan^{-1} \biggl( \frac{P_i^{\star}}{Q_i^{\star} + 
\frac{e^{2\rho_i^{\star}}}{X_{{\rm q}i}}} \biggr)
\end{equation}
where the set of the stationary voltage, and active and reactive powers is denoted by
\[
\nu_i^{\star}:=(\rho_i^{\star},P_i^{\star},Q_i^{\star}).
\]
From this fact, the compatible value of $u_i^{\star}$ can be written as
\begin{equation}\label{eq:inputst}
\spliteq{
P_{{\rm m}i}^{\star} & = P^{\star}_i, \\
V_{{\rm fd}i}^{\star} & = \! \tfrac{ X_{{\rm d}i}P_i^{\star} }{e^{\rho_i^{\star}} } \sfsin \phi_i(\nu_i^{\star})  \!+ \!
\left( \! \tfrac{ X_{{\rm d}i}Q_i^{\star} }{e^{\rho_i^{\star}}}  + e^{\rho_i^{\star}}  \! \right) \sfcos \phi_i(\nu_i^{\star}).
}
\end{equation}
We assume that the mechanical power and field voltage are set to satisfy \eqref{eq:inputst} for a given stationary power flow.

\subsubsection{Constant-Power Load Model}
The active and reactive power injections at bus $i$ are specified by constants
$P_{{\rm c}i}^{\star}$ and  $Q_{{\rm c}i}^{\star}$.
Under the constant-power model, these power injections and consumptions are independent of the bus voltage magnitude and phase. 
Equivalently, the active and reactive power injections at bus $i$ are given by
\begin{equation}\label{eq:conPQ}
P_i = P_{{\rm c}i}^{\star},
\quad
Q_i = Q_{{\rm c}i}^{\star}.
\end{equation}
Note that the negative values of $P_{{\rm c}i}^{\star}$ and $Q_{{\rm c}i}^{\star}$ represent consumption.
The constant power load model in \eqref{eq:conPQ} can be formally expressed as
\begin{equation}\label{eq:formal_model2}
w_i = u_i^{\star},
\quad \forall v_i \in \mathbb{S}\times \mathbb{R}
\end{equation}
where $u_i^{\star}$ is a constant input representing the active and reactive power injections.

For simplicity, this paper focuses on constant-power loads. 
However, the subsequent cofactor-based stability analysis is not limited to this model.
It can be applied to constant-current, constant-impedance, and composite load models as well.
Only the corresponding Jacobian blocks require modification.

\subsubsection{Power System Model}
The entire power system model is obtained as a nonlinear DAE composed of
\begin{subequations}\label{eq:formal_DAE}
\begin{equation}
\simode{
\dot{x} &= f (x,v_{\mathds G};u_{\mathds G}^{\star}) \\
w_{\mathds G} &= h_{\mathds G}(x,v_{\mathds G}) 
}
\end{equation}
for the generator buses, the set of which is denoted by $\mathds{G}$, 
\begin{equation}
w_{\mathds L} = h_{\mathds L}(u_{\mathds L}^{\star}) 
\end{equation}
for the load buses,  the set of which is denoted by $\mathds{L}$, and
\begin{equation}
\simode{
w_{\mathds G}&=g_{\mathds G}(v_{\mathds G},v_{\mathds L})\\
w_{\mathds L}&=g_{\mathds L}(v_{\mathds G},v_{\mathds L}).
}
\end{equation}
\end{subequations}
Without loss of generality, we assume that 
\[
\mathds{N} = \mathds{G} \cup \mathds{L},
\quad
\mathds{G} \cap \mathds{L} = \emptyset.
\]
Note that a stationary state $x^{\star}$ is uniquely determined for each stationary power flow distribution $\varpi^{\star}$.
In the following, we denote an equilibrium by $e^{\star}:=(x^{\star},\varpi^{\star})$.

\subsection{Static Bifurcation in Power Systems}

\subsubsection{Local Asymptotic Stability of An Equilibrium}

We rewrite \eqref{eq:formal_DAE} in a compact form as
\begin{equation}\label{eq:DAEcom}
\simode{
\dot{x} &= f (x,v;u^{\star}) \\
0 &= h(x,v;u^{\star}) -g(v)
}
\end{equation}
where $h$ and $g$ denote the stacked compositions of $h_{\mathds{G}},h_{\mathds{L}}$ and $g_{\mathds{G}},g_{\mathds{L}}$, respectively.
For a given equilibrium $e^{\star}$, we analyze its local asymptotic stability via linearization.
The linearized version of \eqref{eq:DAEcom} is obtained as
\[
\mat{
\Delta \dot{x} \\
0
}
=
\mat{
\frac{\partial f}{\partial x} (e^{\star}) & \frac{\partial f}{\partial v} (e^{\star}) \\
\frac{\partial h}{\partial x} (e^{\star}) & \frac{\partial h}{\partial v} (e^{\star}) -\frac{\partial g}{\partial v} (e^{\star})
}
\mat{\Delta x\\ \Delta v}
\]
where $\Delta x$ and $\Delta v$ represent the deviations of $x$ and $v$ from their stationary values $x^{\star}$ and $v^{\star}$.
Eliminating the algebraic variable $\Delta v$, we obtain the equivalent representation 
\begin{equation}\label{eq:genA}
\Delta \dot{x} = \underbrace{\left\{
\frac{\partial f}{\partial x}  - \frac{\partial f}{\partial v} 
\left(
\frac{\partial h}{\partial v}  -\frac{\partial g}{\partial v} 
\right)^{-1}
\frac{\partial h}{\partial x} 
\right\}
}_{A(e^{\star}) }
\Delta x
\end{equation}
in an ordinary differential equation (ODE) form.

Note that due to the uniform phase-shift symmetry of the power system, the equilibrium is not isolated. 
Rather, it belongs to an equivalence class generated by uniformly shifting all generator internal angles and bus voltage phases.
Accordingly, local asymptotic stability is understood modulo this phase-shift symmetry. 
In the linearized ODE representation, this symmetry gives rise to a trivial zero eigenvalue of $A(e^{\star}) $ associated with the uniform phase shift. 
Excluding this trivial zero eigenvalue, local stability can be lost with respect to a system parameter when an additional real eigenvalue reaches the origin or when a pair of complex conjugate eigenvalues crosses the imaginary axis. 
The former corresponds to a static bifurcation, and the latter corresponds to a Hopf bifurcation. 
This paper focuses on static bifurcations, the type of instability underlying the voltage stability limits.

\subsubsection{Motivating Example of Continuation Power Flow}\label{sec:CPFex}

\begin{figure}[t]
\centering
\includegraphics[width = .90\linewidth]{ 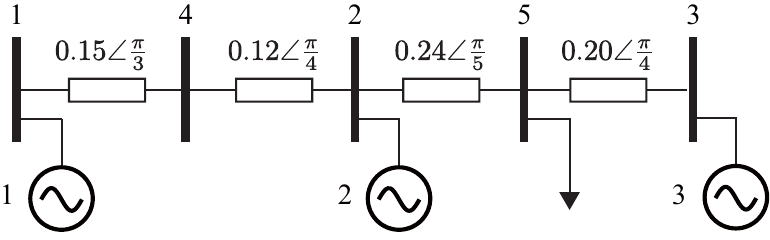}
\medskip
\caption{Example of 5-bus power system.
}
\label{fig:expsys}
\end{figure}

\begin{table}[t]
\caption{Generator constants.}\label{table:genpara}
\centering
\begin{tabular}{cccccccccccccccc}
Gen &  $M$  &  $D$ &  $\tau_{\rm d}$ & $\tau_{\rm q}$ &  $X_{\rm d}$ & $X_{\rm q}$ & $X_{\rm d}'$ & $X_{\rm q}'$  \\
\hline
1 &  $ 12$ &  2  &  8.97  &  1.50 &  0.09 &  0.09  &  0.029 &  0.029  \\
2 &  $ 18$ &  2  &  5.90  &  1.50 &  0.12 &  0.12  &  0.036 &  0.036  \\
3 &  $ 30$ &  3  &  5.14  &  1.50 &  0.20 &  0.20  &  0.062 &  0.062   \\  \hline
\end{tabular}
\end{table}

\begin{table}[t]
\caption{CPF parameter setting.}\label{table:CPFset}
\centering
\begin{tabular}{lccccccccccccccc}
Bus &  $P$  &  $Q$ & $V_{\rm fd}$ \\
\hline
1 (slack) &           &          & 1.03 \\
2 (PV) &  $0.30\lambda$  &             & 1.02 \\
3 (PV) &  $0.50\lambda$  &            & 1.06 \\
4 (PQ) &  0       &  0        & \\
5 (PQ) &  $-1.00\lambda$ &  $0.20\lambda$  &    & \\  \hline
\end{tabular}
\end{table}

This subsection uses a standard CPF calculation as an example of static bifurcation analysis.
The CPF tracks an equilibrium branch under a specified loading direction and is commonly used to estimate voltage stability limits.
For demonstration, we consider the 5-bus power system example composed of three generators and one load shown in Fig.~\ref{fig:expsys}, where the impedances of the transmission lines are shown.
The generator constants are listed in Table~\ref{table:genpara}.
The system angular frequency $\omega_0$ is $120\pi$.

In a CPF calculation, one generator is designated as the slack bus and the others are designated as PV buses to find an equilibrium.
Specifically, the internal angle and field voltage are specified at the slack bus, and the active power and field voltage are specified at each PV bus.
In this example, we consider the case in which the first generator is designated as the slack bus.
The other two generators are designated as PV buses.

On the other hand, buses 4 and 5 are designated as PQ buses, at which the active and reactive power injections are specified.
In this example, only bus 5 has a non-zero power injection due to the constant-power load.
Power injection to bus 4 is zero.
The parameter setting of the CPF calculation is summarized as in Table~\ref{table:CPFset}, where $\lambda$  is the load factor.
A large value of the load factor indicates large power transmission.

\begin{figure}[t]
\centering
\includegraphics[width = .9\linewidth]{ 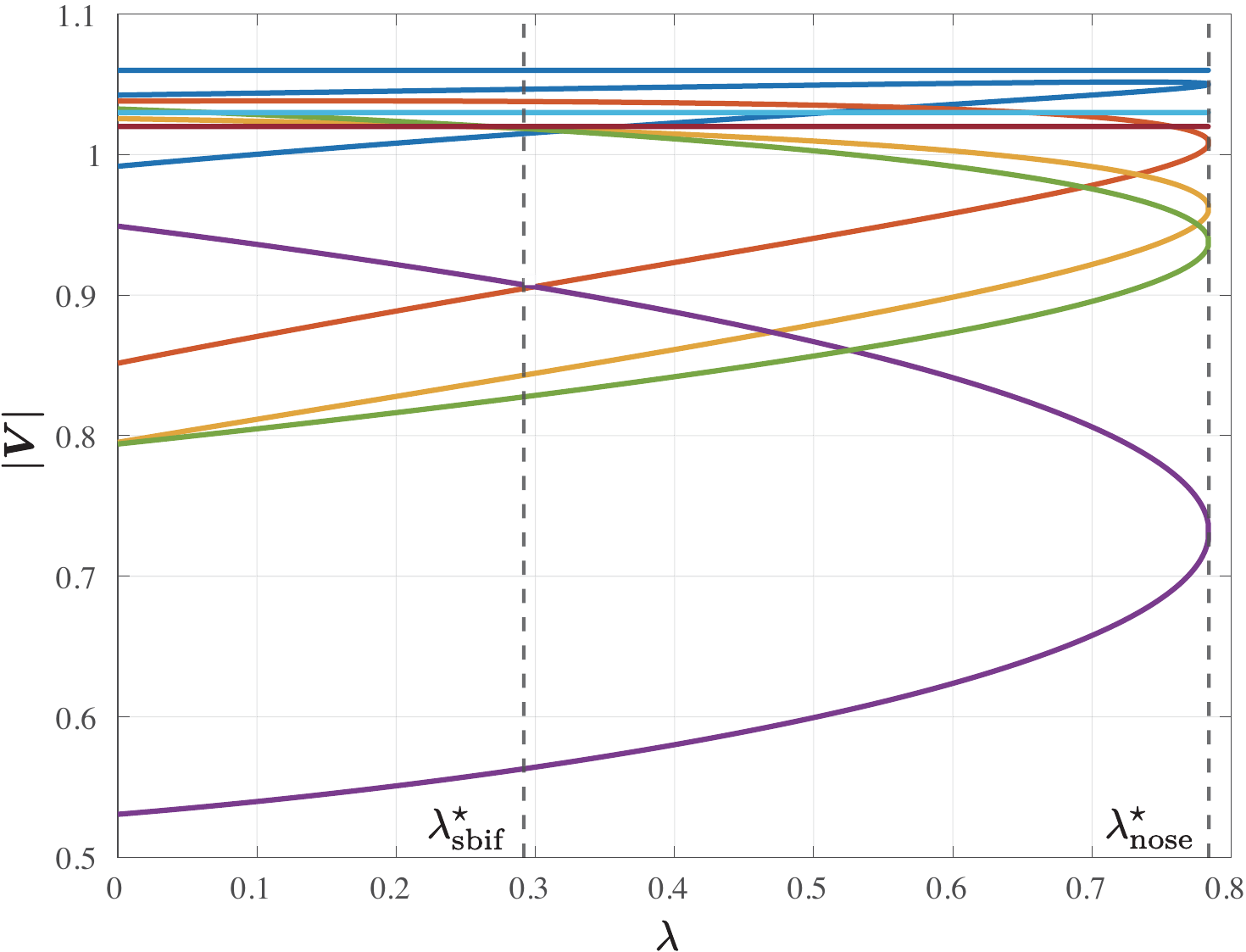}
\medskip
\caption{
Nose curves obtained by CPF calculation of lossy power systems.
}
\label{fig:lossynose}
\end{figure}

\begin{figure}[t]
\centering
\includegraphics[width = .9\linewidth]{ 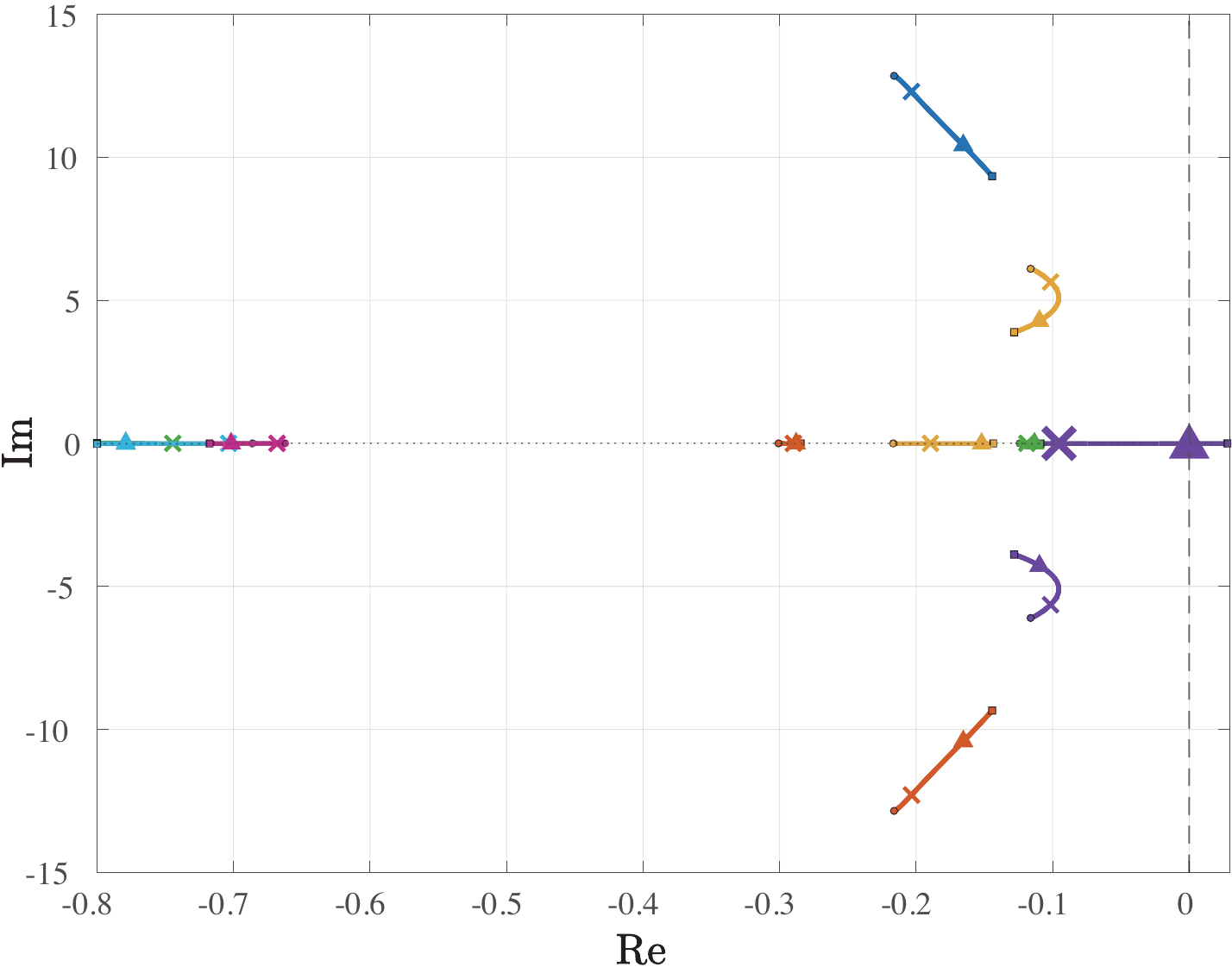}
\medskip
\caption{Change in eigenvalues of lossy power systems with 2-axis generator models.
The cross marks indicate the critical load factor of the nose point.
The triangle marks indicate that of the static bifurcation.
}
\label{fig:eig2axislossy}
\end{figure}

In general, we can find a locally asymptotically stable equilibrium for a sufficiently small load factor.
The CPF method tracks changes in this equilibrium branch as the load factor is varied continuously or incrementally.
The branch reaches a nose point at a critical load factor, beyond which this branch cannot be continued in the loading direction.
The nose curves of buses 1--5 and generators are shown in Fig.~\ref{fig:lossynose}.
The load factor of the nose points is 
\begin{equation}\label{eq:lamnose}
\lambda^{\star}_{\rm nose} \simeq 0.784. 
\end{equation}
Fig.~\ref{fig:eig2axislossy} shows a plot of the change in the eigenvalues of $A(e^{\star})$ in \eqref{eq:genA} for each equilibrium obtained by the CPF calculation.
From this figure, we can see that increasing the load factor causes a static bifurcation.
Along the lower-voltage branch of the same CPF curve, a real eigenvalue reaches the origin.
The load factor of this static bifurcation is 
\begin{equation}\label{eq:lamsbif}
\lambda^{\star}_{\rm sbif} \simeq 0.292.
\end{equation}
In fact, the critical load factors of the nose point and static bifurcation do not coincide.
The cross and triangle marks in Fig.~\ref{fig:eig2axislossy} indicate the corresponding eigenvalues.
This paper will explain the mathematical basis for this gap.

\subsection{Reduction for Static Bifurcation Analysis}

\subsubsection{Reduction of Generator States}

This subsection explains that each synchronous generator can be equivalently replaced with a simpler model in the static bifurcation analysis.
For this purpose, we consider a simple voltage source model, which can also be interpreted as a frequency droop control inverter model, given as 
\begin{subequations}\label{eq:droop_state_DAE}
\begin{equation}
\tfrac{D_i}{\omega_0} \dot{\delta}_i = - P_i + P^{\star}_{\mathrm{m}i}
\end{equation}
with the d-axis and q-axis current equations
\begin{equation}\label{eq:classical_output_I}
    I_{\mathrm{d}i} = \frac{1}{X_{\mathrm{d}i}} ( V^{\star}_{\mathrm{fd}i} - V_{\mathrm{q}i} ) 
    ,\quad
    I_{\mathrm{q}i} = \frac{1}{X_{\mathrm{q}i}} V_{\mathrm{d}i}.
\end{equation}
The active and reactive power outputs are given as
\begin{equation}
P_i  =  V_{{\rm q}i} I_{{\rm q}i} +
V_{{\rm d}i} I_{{\rm d}i} ,\quad
Q_i  =  
V_{{\rm q}i} I_{{\rm d}i} -
V_{{\rm d}i} I_{{\rm q}i},
\end{equation}
\end{subequations}
which are identical to those of the synchronous generator.
Similarly to \eqref{eq:2axis_model}, this model can be formally expressed as
\begin{equation}\label{eq:formal_model_fdc}
\simode{
\dot{\delta}_i &= \hat{f}_i (\delta_i, v_i; u_i^{\star}) \\
w_i &= \hat{h}_i (\delta_i, v_i; u_i^{\star}).
}
\end{equation}
The power system model where all generators are replaced with the simple voltage sources can also be represented as
\begin{equation}\label{eq:DAEcom2}
\simode{
\dot{\delta} &= \hat{f} (\delta,v;u^{\star}) \\
0 &= \hat{h}(\delta,v;u^{\star}) -g(v).
}
\end{equation}
The corresponding linearized ODE model is obtained as
\begin{equation}\label{eq:lin_fdc}
\Delta \dot{\delta} = \underbrace{\left\{
\frac{\partial \hat{f}}{\partial \delta}  - \frac{\partial \hat{f}}{\partial v} 
\left(
\frac{\partial \hat{h}}{\partial v} -\frac{\partial g}{\partial v} 
\right)^{-1}
\frac{\partial \hat{h}}{\partial \delta} 
\right\}
}_{\hat{A}(e^{\star}) }
\Delta \delta.
\end{equation}

\begin{figure}[t]
\centering
\includegraphics[width = .9\linewidth]{ 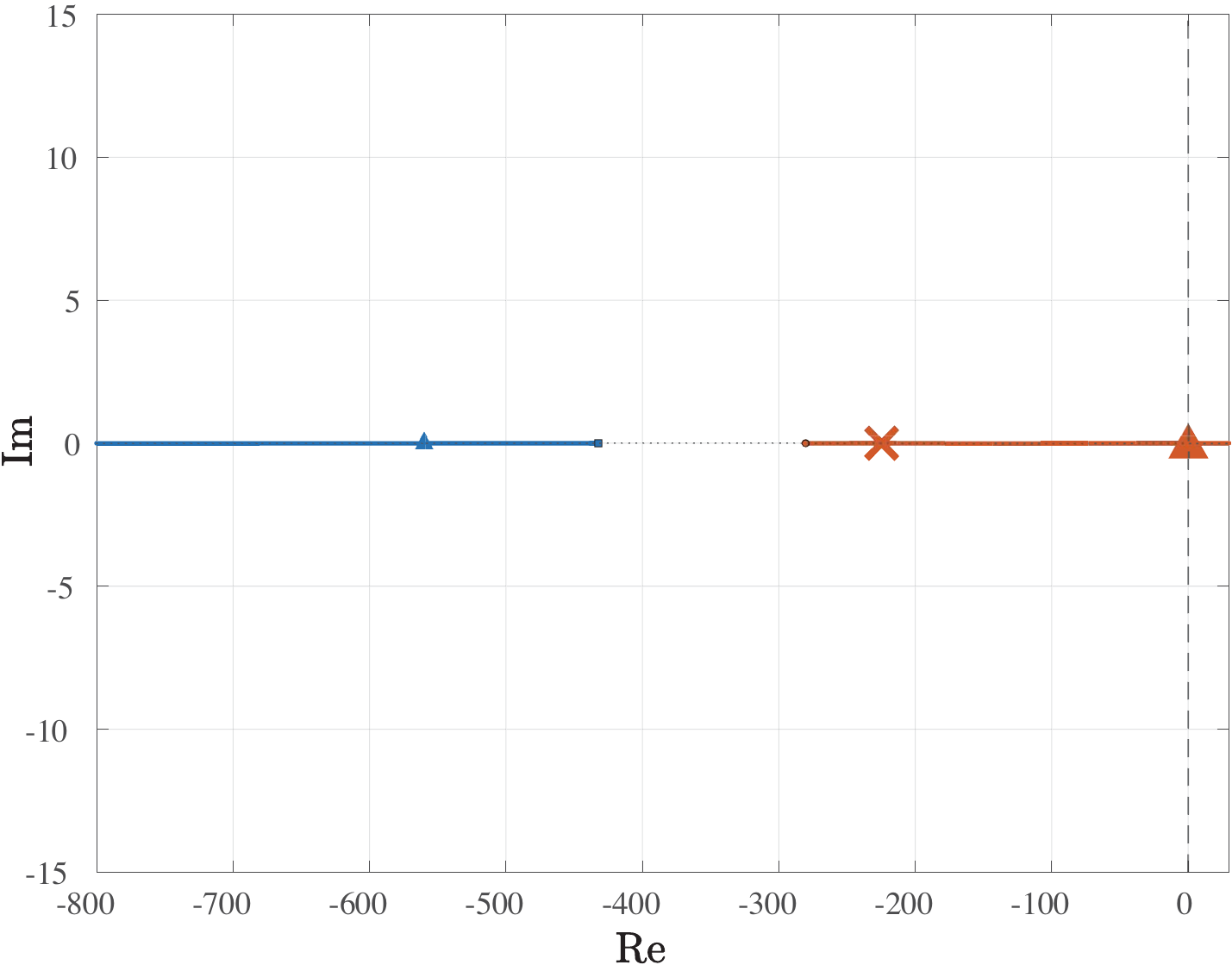}
\medskip
\caption{Change in eigenvalues of lossy power systems with simple voltage source models.
The cross marks indicate the critical load factor of the nose point.
The triangle marks indicate that of the static bifurcation.
}
\label{fig:eigdrooplossy}
\end{figure}

Fig.~\ref{fig:eigdrooplossy} shows a plot of the change in the eigenvalues of $\hat{A}(e^{\star})$ in \eqref{eq:lin_fdc} obtained from the same CPF analysis as in Section~\ref{sec:CPFex}.
In fact, though the eigenvalues differ significantly, the nontrivial real eigenvalue reaches the origin at the same critical load factor $\lambda^{\star}_{\rm sbif}$ in \eqref{eq:lamsbif}.
This result can be mathematically proven using the implicit function theorem with an analysis of the generator energy function as follows.

Consider the stationary equation of \eqref{eq:2axis_model} with respect to the variables $\omega_i$, $E_{{\rm q}i}$, and $E_{{\rm d}i}$ as
\[
\simode{
0 &= -D_i \omega_i - P_i + P^{\star}_{\mathrm{m}i} \\
0 &= - E_{\mathrm{q}i}
 - (X_{\mathrm{d}i} - X'_{\mathrm{d}i}) I_{\mathrm{d}i} + V^{\star}_{\mathrm{fd}i} \\
0 &= - E_{\mathrm{d}i}  + (X_{\mathrm{q}i} - X'_{\mathrm{q}i}) I_{\mathrm{q}i}.
}
\]
Eliminating $(\omega_i,E_{{\rm q}i},E_{{\rm d}i})$ from the differential equation of $\delta_i$, and the equations of the d-axis current $I_{{\rm d}i}$ and the q-axis current $I_{{\rm q}i}$ leads to the simplified voltage source model in \eqref{eq:droop_state_DAE}.
This means that the same stationary value of $\delta_i^{\star}$ is found by the CPF analysis for both the synchronous generator model in \eqref{eq:2axis_model} and the simplified voltage source model in \eqref{eq:droop_state_DAE} with the same $V^{\star}_{\mathrm{fd}i}$.

In fact, this reduction operation with respect to the stationary values is invertible around any feasible equilibrium.
This fact follows from the implicit function theorem, since the partial Jacobian with respect to $(\omega_i,E_{{\rm q}i},E_{{\rm d}i})$ is always nonsingular. 
The nonsingularity of the partial Jacobian is proven by the fact that the corresponding partial Hessian of the generator energy function is always positive definite; see \cite{NishinoOnishiIshizaki2025} for details.
Hence, the internal generator dynamics does not introduce additional degeneracy.
This means that the static bifurcation relevant to voltage stability can be equivalently analyzed using the retained variables.

\subsubsection{Reduction of Bus Voltage Variables}

We use the gradient-based representation of the simplified voltage source model as
\[
\simode{
\tfrac{D_i}{\omega_0}\dot{\delta}_i&= - \tfrac{\partial U_i}{\partial \delta_i} (\delta_i,v_i) + P^{\star}_{\mathrm{m}i} \\
w_i &= -\tfrac{\partial U_i}{\partial v_i}(\delta_i,v_i) 
}
\]
where $U_i$ is the energy function defined as
\begin{equation} \label{eq:droop_potential}
U_i (\delta_i,v_i)
= \frac{V^2_{\mathrm{d}i}}{2 X_{\mathrm{q}i}} +
\frac{( V^{\star}_{\mathrm{fd}i} - V_{\mathrm{q}i} )^2 }{2 X_{\mathrm{d}i}}.
\end{equation}
Let $U$ be the stacked composition of $U_i$.
Then, the linearized DAE can be written as
\[
\mat{D \Delta \dot{\delta} \\ 0}
= - 
\Biggl(
\underbrace{
\mat{
\frac{\partial^2 U}{\partial \delta^2}   & \frac{\partial^2 U}{\partial \delta \partial v} \\
\frac{\partial^2 U}{\partial v \partial \delta} & \frac{\partial^2 U}{\partial v^2} 
}
+
\mat{
0& 0 \\
0 & \frac{\partial g}{\partial v}
}
}_{J(e^{\star})}
\Biggr)
\mat{\Delta \delta \\ \Delta v}
\]
where $D$ is the positive diagonal matrix composed of $D_i/\omega_0$.
The first term of $J(e^{\star})$ is relevant to the voltage source, and it is the Hessian of the energy function being symmetric.
The second term is relevant to the transmission network, and it is generally not symmetric in lossy cases.

We denote the block matrices of $J(e^{\star})$ by
\begin{equation}\label{eq:blkJ}
J(e^{\star}) =\mat{
J_{\delta \delta}(e^{\star})  & J_{\delta v}(e^{\star}) \\
J_{v \delta}(e^{\star}) & J_{vv}(e^{\star})
}.
\end{equation}
Then, $\hat{A}(e^{\star})$ in \eqref{eq:lin_fdc} can be represented as
\begin{equation}\label{eq:hatA_L}
\hat{A}(e^{\star}) = - D^{-1} L(e^{\star})
\end{equation}
where $L(e^{\star})$ is the reduced Jacobian  of $J(e^{\star})$ defined as
\begin{equation}\label{eq:VSL}
L(e^{\star}):=
J_{\delta \delta}(e^{\star}) 
- J_{\delta v}(e^{\star}) J_{vv}^{-1}(e^{\star}) J_{v \delta}(e^{\star}) .
\end{equation}
We will use $L(e^{\star})$ as a key indicator of static bifurcations.

Throughout this paper, we assume that the partial Jacobian $J_{vv}(e^{\star})$ is nonsingular at the equilibria under consideration.
This assumption is reasonable from a physical standpoint because the bus voltage variables must be uniquely determined around the equilibrium.
It is important to note that the Schur complement operation in \eqref{eq:VSL} does not ignore the effect of the bus voltage variables. 
Rather, their stationary characteristics are retained in a mathematically equivalent manner.

\section{Voltage Stability Kernel Theory}

This section develops the cofactor theory of voltage stability based on the reduced Jacobian in the previous section.
We then interpret CPF calculations through this cofactor structure, highlighting why CPF nose points can differ from static bifurcations in lossy power systems.

\subsection{Voltage Stability Laplacian, Kernel, and Margin}

\subsubsection{General Case of Lossy Power Systems}

The purpose of this subsection is to extract algebraic information from the reduced Jacobian $L(e^\star)$ in \eqref{eq:VSL}.
For simplicity, we first assume that the time constant matrix $D$ in \eqref{eq:hatA_L} is the identity matrix.
In Section~\ref{sec:gentime}, we will provide a generalization to the case of nonuniform time constants.
In the following, a ``bus" refers to a voltage source bus.
We first introduce the following terminology. 

\begin{definition}
The reduced Jacobian $L(e^{\star})$ defined as in \eqref{eq:VSL} is called a \textbf{voltage stability Laplacian} (VSL).
\end{definition}

From the definition, it is clear that a nontrivial degeneracy of the VSL is equivalent to the static bifurcation under consideration.
Although the VSL is generally nonsymmetric in lossy power systems, it retains a Laplacian-like structure induced by the uniform phase-shift symmetry. 
This will be proven as follows.

\begin{lemma}\label{lem:VSL}
For $L(e^{\star})$ in \eqref{eq:VSL}, it follows that
\begin{equation}\label{eq:Laps}
L(e^{\star}) \mathds{1}_{|\mathds{G}|} =0.
\end{equation}
\end{lemma}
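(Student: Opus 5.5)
The plan is to derive the identity from the phase-shift invariance of the underlying functions. Let $n_{\mathds G}=|\mathds{G}|$ and $n_{\mathds N}=|\mathds{N}|$, and define the uniform shift direction
\[
\xi := \bigl(\mathds{1}_{n_{\mathds G}},\, \mathds{1}_{n_{\mathds N}},\, 0\bigr)
\]
in the coordinates $(\delta,\theta,\rho)$. In words, $\xi$ shifts every internal angle and every bus phase by the same amount and leaves every logarithmic voltage magnitude unchanged.

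Both ingredients of $J(e^{\star})$ are invariant under this shift. The energy function $U_i$ in \eqref{eq:droop_potential} depends on $(\delta_i,\theta_i)$ only through $\delta_i-\theta_i$, because $V_{{\rm d}i}$ and $V_{{\rm q}i}$ do. The network map $g$ in \eqref{eq:PQ_def} depends on $\theta$ only through the differences $\theta_i-\theta_j$. Consequently the gradient $\partial U/\partial(\delta,v)$ and the map $g(v)$ are both invariant along $\xi$. Differentiating these invariances at $e^{\star}$ gives
\[
\Bigl(\tfrac{\partial^2 U}{\partial(\delta,v)^2}\Bigr)\xi = 0, \qquad \tfrac{\partial g}{\partial v}\,(\mathds{1}_{n_{\mathds N}},0)=0 .
\]
Summing the two terms in the definition of $J(e^{\star})$, we conclude that $J(e^{\star})\xi=0$.

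Next we write this identity blockwise using \eqref{eq:blkJ}. With $\bar v := (\mathds{1}_{n_{\mathds N}},0)$, it reads
\[
J_{\delta\delta}\mathds{1}_{n_{\mathds G}} + J_{\delta v}\bar v = 0, \qquad J_{v\delta}\mathds{1}_{n_{\mathds G}} + J_{vv}\bar v = 0 .
\]
Since $J_{vv}(e^{\star})$ is assumed nonsingular, the second equation gives $\bar v = -J_{vv}^{-1}J_{v\delta}\mathds{1}_{n_{\mathds G}}$. Substituting this into the first equation yields exactly $L(e^{\star})\mathds{1}_{n_{\mathds G}}=0$ by \eqref{eq:VSL}.

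The only step needing care is the invariance argument. One must check that the shift acts on $v=(\theta,\rho)$ with zero component in $\rho$. One must also check that $U_i$ involves only the bus $i$ component of $v$, so that the block Hessian, viewed as a function of all buses (including load buses, where it is zero), annihilates $\xi$. Beyond these checks, the argument is just the standard fact that the Schur complement preserves a null vector whose eliminated part is determined by the nonsingular block.
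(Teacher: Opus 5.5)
Your proposal is correct and follows the paper's proof: both establish $J(e^{\star})\xi=0$ for the uniform shift $\xi=(\mathds{1}_{|\mathds{G}|},\mathds{1}_{|\mathds{N}|},0)$, then eliminate the $v$-block using the nonsingularity of $J_{vv}(e^{\star})$ to obtain $L(e^{\star})\mathds{1}_{|\mathds{G}|}=0$. The only difference is that you justify the kernel identity, via the dependence of $U_i$ on $\delta_i-\theta_i$ only and of $g$ on phase differences only, whereas the paper simply asserts it from phase-shift symmetry.
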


\begin{IEEEproof}
Due to the uniform phase-shift symmetry, it follows that
\[
\underbrace{
\mat{
J_{\delta \delta} & J_{\delta \theta} & J_{\delta \rho} \\
J_{\theta \delta} & J_{\theta \theta} & J_{\theta \rho} \\
J_{\rho \delta} & J_{\rho \theta} & J_{\rho \rho} 
}
}_{J}
\mat{
\mathds{1}_{|\mathds{G}|} \\ \mathds{1}_{|\mathds{N}|}\\0
}
=0
.
\]
Therefore, we have
\[
J_{\delta \delta} \mathds{1}_{|\mathds{G}|}
+
J_{\delta v}
q_v
=0,\quad
J_{v \delta}
\mathds{1}_{|\mathds{G}|}
+
J_{vv} q_v=0
\]
where the stacked vector and matrices are defined as
\[
q_v=\mat{
\mathds{1}_{|\mathds{N}|}\\0
}
,\quad
J_{\delta v} = \mat{J_{\delta \theta} & J_{\delta \rho}}
,\quad
J_{v \delta}=\mat{
J_{\theta \delta} \\ J_{\rho \delta}
}.
\]
Thus, eliminating $q$ leads to \eqref{eq:Laps}.
\end{IEEEproof}
\smallskip

Lemma~\ref{lem:VSL} shows that the VSL always has a trivial zero eigenvalue. 
Therefore, the ordinary determinant of the VSL is identically zero and cannot be used directly as a bifurcation indicator. 
Instead, we use the following principal cofactors, which remain informative even in the presence of this trivial zero eigenvalue.

\begin{definition}
Let $L_{-i}(e^{\star})$ denote the submatrix obtained by removing the $i$th row and column from $L(e^{\star})$ in \eqref{eq:VSL}.
Then
\begin{equation}\label{eq:VSK}
\kappa(e^{\star}) := \mat{
\sfdet ( L_{-1}(e^{\star}) ) \\
\vdots \\
\sfdet ( L_{-|\mathds{G}|}(e^{\star}) )
}
\end{equation}
is called a \textbf{voltage stability kernel} (VSK).
\end{definition}

The determinant of $L_{-i}(e^{\star})$, the $i$th element of the VSK, corresponds to the $i$th diagonal element of the cofactor of the VSL, meaning that the VSK is the vector of principal cofactors.
The terminology ``kernel'' is justified as follows.

\begin{lemma}\label{lem:VSK}
For $L(e^{\star})$ in \eqref{eq:VSL}, it follows that
\begin{equation}
\kappa^{\sf T}(e^{\star}) L(e^{\star})  =0
\end{equation}
where $\kappa(e^{\star})$ is defined as in \eqref{eq:VSK}.
\end{lemma}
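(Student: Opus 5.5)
The plan is to go through the adjugate of the VSL. Write $n:=|\mathds{G}|$ and $L:=L(e^{\star})$, and let $C:=\sfadj(L)$. Then $C_{ji}=(-1)^{i+j}\sfdet(L_{-(i,j)})$, where $L_{-(i,j)}$ removes row $i$ and column $j$. The classical identity
\[
C L = L C = \sfdet(L)\, I_n
\]
combined with Lemma~\ref{lem:VSL}, which gives $\sfdet(L)=0$, yields $CL=0$ and $LC=0$. We will show that every row of $C$ equals $\kappa^{\sf T}(e^{\star})$, i.e.\ $C=\mathds{1}_n\kappa^{\sf T}(e^{\star})$. Then $CL=0$ reads $\mathds{1}_n\kappa^{\sf T}L=0$, which is exactly the claim.

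The key step is to show that the adjugate is constant along each column. We use the zero row-sum property $L\mathds{1}_n=0$ from Lemma~\ref{lem:VSL}: every column of $L$ equals minus the sum of the other columns. Fix $i$ and two row indices $j\neq k$. We compare $\sfdet(L_{-(i,j)})$ with $\sfdet(L_{-(i,k)})$; both delete row $i$. In $L_{-(i,k)}$, column $j$ is still present, and we replace it by the sum of all remaining columns. This leaves the determinant unchanged and, by the row-sum identity, turns that column into $-(\text{column } k)$ restricted to rows $\neq i$. Moving this column from the position of $j$ to the position of $k$ costs a sign $(-1)^{j-k-1}$ (for the ordering $j<k$ or its symmetric counterpart), and the overall minus sign contributes one more $-1$. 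Hence
\[
(-1)^{i+k}\sfdet(L_{-(i,k)})=(-1)^{i+j}\sfdet(L_{-(i,j)}),
\]
that is, $C_{ki}=C_{ji}$. Column $i$ of $C$ is therefore constant, and its value equals the diagonal entry $C_{ii}=\sfdet(L_{-i})=\kappa_i$. This gives $C=\mathds{1}_n\kappa^{\sf T}$.

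The main obstacle is the sign bookkeeping in the column-replacement argument. I would verify it carefully, checking the small case $n=2$, where $L=\left[\begin{smallmatrix} a & -a\\ b & -b\end{smallmatrix}\right]$, $\kappa=(-b,a)$, and $\kappa^{\sf T}L=(-ab+ab,\;ab-ab)=0$.

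An alternative that avoids this bookkeeping is a perturbation argument. If $\mathrm{rank}\,L=n-1$, then $LC=0$ forces every column of $C$ to be a multiple of $\mathds{1}_n$, which gives constant columns directly. If $\mathrm{rank}\,L<n-1$, then $C=0$ and $\kappa=0$, so the claim holds trivially. Both routes work, and I would present the direct cofactor computation with the rank argument as a remark.
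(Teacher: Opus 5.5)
Your proposal is correct and follows the paper's proof essentially step for step. Lemma~\ref{lem:VSL} gives $\sfadj(L)L=\sfdet(L)I_{|\mathds{G}|}=0$, and the zero row sums of $L$, together with multilinearity and the alternating property of the determinant, give $\sfadj(L)=\mathds{1}_{|\mathds{G}|}\kappa^{\sf T}$ (the paper phrases this as constant rows of the cofactor matrix rather than constant columns of the adjugate). Your sign count $(-1)^{i+k}\sfdet(L_{-(i,k)})=(-1)^{i+j}\sfdet(L_{-(i,j)})$ checks out and spells out a step the paper only asserts, and your rank-based alternative is also valid.
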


\begin{IEEEproof}
Let $\sfadj(L)$ denote the adjugate matrix of $L$.
The adjugate matrix satisfies the identity
\[
\sfadj(L)L
=
L\sfadj(L)
=
\sfdet(L)I_{|\mathds{G}|}.
\]
Note that $\sfdet(L)$ is zero due to the trivial zero eigenvalue.
Thus, we have
\begin{equation}\label{eq:reladj1}
\sfadj(L)L=0.
\end{equation}
We next show that
\[
\sfadj(L)
=
\mathds{1}_{|\mathds{G}|}\kappa^{\sf T}.
\]
Let $C$ denote the cofactor matrix of $L$, whose $(i,j)$-entry is given by
\[
C_{ij}
=
(-1)^{i+j}\sfdet\left(L_{-ij}\right),
\]
where $L_{-ij}$ denotes the submatrix obtained by deleting the $i$th row and the $j$th column of $L$.
By definition of the adjugate matrix, we have
\[
\sfadj(L)=C^{\sf T}.
\]
We show the structure of $C$.
Let $R_i$ be the matrix obtained by deleting the $i$th row of $L$.
Denote the columns of $R_i$ by
\[
R_i = \mat{r_1 & \cdots & r_{|\mathds{G}|}}.
\]
As shown in \eqref{eq:Laps}, the sum of all columns of $L$ is zero.
Therefore, the sum of all columns of $R_i$ is also zero as
\[
r_1+\cdots+r_{|\mathds{G}|}=0.
\]
This implies that, for any $j\in\mathds{G}$
\[
r_j
=
-\sum_{k \in\mathds{G}\setminus\{j\}} r_k .
\]
Using the multilinearity and alternating property of the determinant, it follows that
\[
(-1)^1\sfdet\left(L_{-i1}\right)
=\cdots =
(-1)^{|\mathds{G}|}
\sfdet \left(L_{-i|\mathds{G}|}\right).
\]
Hence, the cofactors in the $i$th row of $C$ satisfy
\[
C_{ij}=C_{ii},
\quad
\forall j\in\mathds{G}.
\]
From the definition of $\kappa_i$, the $i$th element of $\kappa$, we have
\[
C_{ii}
=
\kappa_i.
\]
Thus, the cofactor matrix has the structure
\[
C
=
\mat{
\kappa_1 & \cdots & \kappa_1\\
\vdots & \ddots & \vdots \\
\kappa_{|\mathds{G}|} & \cdots & \kappa_{|\mathds{G}|}
}.
\]
Consequently, we have
\begin{equation}\label{eq:reladj2}
\sfadj(L)
=
C^{\sf T}
=
\mathds{1}_{|\mathds{G}|}\kappa^{\sf T}.
\end{equation}
Combining \eqref{eq:reladj1} and \eqref{eq:reladj2} proves the claim.
\end{IEEEproof}
\smallskip

Lemma~\ref{lem:VSK} states that the vector of principal cofactors is not merely a collection of minors, but forms a left kernel of the VSL. 
This is particularly important in lossy power systems, where the VSL is generally nonsymmetric, meaning that the left and right kernels need not coincide.

The primary contribution of this paper is to show that the VSK quantifies a voltage stability margin of  lossy power systems on a bus-by-bus basis. 
To this end, we introduce the following metric.

\begin{definition}
Let $\kappa_i(e^{\star})$ denote the $i$th component of $\kappa(e^{\star})$ in \eqref{eq:VSK}.
Then
\begin{equation}\label{eq:VSM}
\kappa_{{\rm tot}}(e^{\star}):=\sum_{i=1}^{|\mathds{G}|} \kappa_i (e^{\star})
\end{equation}
is called a \textbf{voltage stability margin} (VSM).
\end{definition}

The VSM is defined as the total contribution of all VSK components.
The following theorem shows that this cofactor-based quantity has a spectral interpretation as the product of all nontrivial eigenvalues of the VSL.

\begin{theorem}\label{thm:VSKthm}
For $\kappa_{{\rm tot}}(e^{\star})$ in \eqref{eq:VSM}, it follows that
\begin{equation}\label{eq:eqmu}
\kappa_{{\rm tot}}(e^{\star}) = \prod_{\lambda \in {\bm \Lambda}_L (e^{\star})} \lambda
\end{equation}
where ${\bm \Lambda}_L (e^{\star})$ denotes the multiset of all the eigenvalues of $L(e^{\star})$ in \eqref{eq:VSL} including algebraic multiplicity except the trivial zero eigenvalue.
\end{theorem}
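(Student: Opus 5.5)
We will relate the characteristic polynomial of the VSL to its principal minors. Write $n:=|\mathds{G}|$ and $L=L(e^{\star})$, and consider
\[
p(s):=\sfdet(sI_{n}-L)=s^{n}+c_{n-1}s^{n-1}+\cdots+c_1 s+c_0 .
\]
The plan is to compute the linear coefficient $c_1$ in two ways: once through the eigenvalues and once through the principal minors of $L$.

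First, by Lemma~\ref{lem:VSL}, $0$ is an eigenvalue of $L$, so $c_0=p(0)=\sfdet(-L)=0$. Factoring over $\mathbb{C}$ gives
\[
p(s)=s\prod_{\lambda\in{\bm \Lambda}_L}(s-\lambda),
\]
where ${\bm \Lambda}_L$ is the multiset of the remaining $n-1$ eigenvalues, counted with algebraic multiplicity. Hence
\[
c_1=\prod_{\lambda\in{\bm \Lambda}_L}(-\lambda)=(-1)^{n-1}\prod_{\lambda\in{\bm \Lambda}_L}\lambda .
\]
This reading requires the trivial zero to be removed exactly once, which matches the statement's convention. If zero has algebraic multiplicity greater than one, then the product over ${\bm \Lambda}_L$ contains a zero factor. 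Then both sides of \eqref{eq:eqmu} vanish, provided the second computation also gives $c_1=0$, and it does.

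Second, we use the standard expansion of $\sfdet(sI-L)$ by multilinearity in the columns. The coefficient of $s^{k}$ equals $(-1)^{n-k}$ times the sum of all principal minors of $L$ of order $n-k$. For $k=1$ these are exactly the $(n-1)\times(n-1)$ principal minors $\sfdet(L_{-i})=\kappa_i$. Therefore
\[
c_1=(-1)^{n-1}\sum_{i=1}^{n}\kappa_i=(-1)^{n-1}\kappa_{\rm tot}.
\]
An equivalent route is Jacobi's formula: $c_1=p'(0)=\sftr\,\sfadj(-L)=(-1)^{n-1}\sftr\,\sfadj(L)$. Since $\sfadj(L)=\mathds{1}_{n}\kappa^{\sf T}$ by \eqref{eq:reladj2}, its trace is $\kappa_{\rm tot}$. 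Comparing the two expressions for $c_1$ yields \eqref{eq:eqmu}.

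The main point needing care is the bookkeeping of signs and multiplicities in the principal-minor expansion of the characteristic polynomial. Going through Jacobi's formula and \eqref{eq:reladj2} avoids that combinatorial step. The only step I would verify explicitly is $\frac{d}{ds}\sfdet(sI-L)\big|_{s=0}=\sftr\,\sfadj(-L)$, together with $\sfadj(-L)=(-1)^{n-1}\sfadj(L)$.
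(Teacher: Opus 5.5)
Your argument is correct and is essentially the paper's proof. The paper differentiates $p(\epsilon)=\sfdet(L+\epsilon I_{|\mathds{G}|})$ at $\epsilon=0$ and compares $\prod_{\lambda\in{\bm \Lambda}_L}\lambda$ (from the shifted-eigenvalue factorization) with $\sftr(\sfadj(L))=\kappa_{\rm tot}$ (from Jacobi's formula and \eqref{eq:reladj2}), which is your linear-coefficient comparison, except that using $L+\epsilon I$ instead of $sI-L$ removes the $(-1)^{n-1}$ bookkeeping; your principal-minor expansion is also valid and does not need \eqref{eq:reladj2}, since the diagonal of $\sfadj(L)$ is $\kappa$ by definition.
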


\begin{IEEEproof}
Consider the scalar polynomial
\[
p(\epsilon):=\sfdet(L+\epsilon I_{|\mathds{G}|}).
\]
Since the eigenvalues of $L+\epsilon I_{|\mathds{G}|}$ are shifted by $\epsilon$, we have
\[
p(\epsilon)
=
\epsilon \prod_{\lambda \in {\bm \Lambda}_L } (\lambda +\epsilon).
\]
Therefore, we have
\[
\frac{d p}{d\epsilon}(0)
=
\prod_{\lambda \in {\bm \Lambda}_L } \lambda.
\]

On the other hand, using an identity of the derivative of the determinant, we have
\[
\frac{d p}{d\epsilon}(\epsilon)
=
\sftr\left(
\sfadj(L+\epsilon I_{|\mathds{G}|})
\right).
\]
Evaluating this identity at zero gives
\[
\frac{d p}{d\epsilon}(0)
=
\sftr(\sfadj(L)) =
\sum_{i=1}^{|\mathds{G}|}\kappa_i
=\kappa_{\rm tot}
\]
where we have used \eqref{eq:reladj2} to derive the second equality.
This proves \eqref{eq:eqmu}.
\end{IEEEproof}
\smallskip

Theorem~\ref{thm:VSKthm} shows that the VSM is equivalently characterized by the ``pseudo-determinant" of the VSL, defined as the product of all eigenvalues of the VSL except the trivial zero eigenvalue.
Thus, the vanishing of the VSM 
\begin{equation}\label{eq:stbifcon}
\kappa_{{\rm tot}}(e^{\star}) = 0
\end{equation}
detects a nontrivial degeneracy of the VSL, or equivalently a static bifurcation.
 
The key message of Theorem~\ref{thm:VSKthm} is that in a lossy power system with a nonsymmetric VSL, the contribution of each bus to system-wide voltage stability is not uniform because the VSK components are not uniform.
Note that VSK components can take on negative values in a lossy power system. 
A negative VSK component can be interpreted as a signed contribution that offsets the positive contributions of other buses in the cofactor decomposition of the VSM.
The system-wide VSL degenerates when the sum of the voltage stability contributions of all voltage sources is zero. 
Therefore, the vanishing of a particular VSK component does not necessarily imply a VSL degeneracy.

\subsubsection{Special Case of Lossless Power Systems}

In a general lossy power system, the VSK components are not necessarily identical because the VSL is generally nonsymmetric.
On the other hand, in lossless power systems, the VSL becomes symmetric under the present formulation.
Motivated by this fact, we state the following result in symmetric cases.

\begin{theorem}\label{thm:VSK2}
Suppose that $L(e^{\star})$ in \eqref{eq:VSL} is symmetric. 
Then, for $\kappa_i(e^{\star})$, the $i$th component of $\kappa(e^{\star})$ in \eqref{eq:VSK}, it follows that
\begin{equation}\label{eq:eqmull}
\kappa_i (e^{\star}) = \frac{1 }{|\mathds{G}|} \kappa_{{\rm tot}}(e^{\star}) ,
\quad
\forall i \in \mathds{G}
\end{equation}
where $\kappa_{{\rm tot}}(e^{\star})$ is defined as in \eqref{eq:VSM}.
\end{theorem}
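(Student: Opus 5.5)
The plan is to use the adjugate structure already obtained in the proof of Lemma~\ref{lem:VSK}, namely \eqref{eq:reladj2}: $\sfadj(L)=\mathds{1}_{|\mathds{G}|}\kappa^{\sf T}$. This identity only uses the right-kernel property \eqref{eq:Laps}, so it holds for any VSL. In the symmetric case we add the transposed version of the same fact, and then compare the two expressions for $\sfadj(L)$.

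First, if $L=L^{\sf T}$, then Lemma~\ref{lem:VSL} gives $\mathds{1}_{|\mathds{G}|}^{\sf T}L=(L\mathds{1}_{|\mathds{G}|})^{\sf T}=0$, so every row sum and every column sum of $L$ vanishes. Second, the adjugate of a symmetric matrix is symmetric: the cofactor matrix satisfies $C_{ji}=(-1)^{i+j}\sfdet(L_{-ji})=(-1)^{i+j}\sfdet((L_{-ij})^{\sf T})=C_{ij}$. Hence $\sfadj(L)=\sfadj(L)^{\sf T}$.

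Applying this symmetry to \eqref{eq:reladj2} gives
\[
\mathds{1}_{|\mathds{G}|}\kappa^{\sf T}=\kappa\,\mathds{1}_{|\mathds{G}|}^{\sf T}.
\]
Comparing the $(i,j)$ entries yields $\kappa_j=\kappa_i$ for all $i,j\in\mathds{G}$, so all VSK components coincide. Summing over $i$ and using the definition \eqref{eq:VSM} then gives $\kappa_{\rm tot}=|\mathds{G}|\kappa_i$, which is \eqref{eq:eqmull}.

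Alternatively, one can rerun the column-manipulation argument of Lemma~\ref{lem:VSK} on rows, using the zero row sums. This shows $C_{ij}=C_{jj}$, and combined with $C_{ij}=C_{ii}$ it again gives $\kappa_i=\kappa_j$.

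I do not expect any step to be a real obstacle. The only point needing care is the symmetry of the adjugate, which rests on $\sfdet(A^{\sf T})=\sfdet(A)$ and the fact that $(-1)^{i+j}$ is symmetric in $i$ and $j$. It is also worth checking that \eqref{eq:reladj2} does not itself assume symmetry; it does not, since it uses only $L\mathds{1}_{|\mathds{G}|}=0$.
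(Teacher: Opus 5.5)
Your proof is correct, but it takes a different route from the paper. The paper argues through kernels: for symmetric $L$ the left and right kernels coincide, $\kappa$ lies in the left kernel by Lemma~\ref{lem:VSK}, and $\mathds{1}_{|\mathds{G}|}$ is taken to span the right kernel, so $\kappa=\alpha\mathds{1}_{|\mathds{G}|}$, with $\alpha$ obtained by summing. You instead combine the rank-one factorization \eqref{eq:reladj2}, which needs only $L\mathds{1}_{|\mathds{G}|}=0$, with the symmetry of the adjugate, and compare entries of $\mathds{1}_{|\mathds{G}|}\kappa^{\sf T}=\kappa\mathds{1}_{|\mathds{G}|}^{\sf T}$.

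The paper's version is shorter and ties the result to the kernel interpretation behind the VSK's name. However, it tacitly assumes that $\ker L$ is one-dimensional. When $L$ has nullity at least two, knowing $\kappa\in\ker L$ no longer forces $\kappa$ to be a multiple of $\mathds{1}_{|\mathds{G}|}$. For symmetric $L$ this is exactly the static bifurcation case $\kappa_{\rm tot}=0$, so the gap sits where the theorem matters most. Closing it requires the separate observation that all $(|\mathds{G}|-1)$-minors then vanish, so $\kappa=0$.

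Your entrywise argument holds for every rank, needs no case split, and uses only the definition \eqref{eq:VSM}, with no spectral information. Your row-manipulation variant also makes clear that symmetry enters only through $\mathds{1}_{|\mathds{G}|}^{\sf T}L=0$. The conclusion therefore holds for any VSL whose column sums also vanish, not just symmetric ones.
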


\begin{IEEEproof}
Due to the symmetry of $L$, its left and right kernels coincide.
Thus, there exists a scalar $\alpha$ such that
\[
\kappa = \alpha \mathds{1}_{|\mathds{G}|}.
\]
From \eqref{eq:eqmu}, $\alpha$ is found as $\tfrac{\kappa_{{\rm tot}}}{|\mathds{G}|} $.
This proves the claim.
\end{IEEEproof}
\smallskip

Theorem~\ref{thm:VSK2} shows that the bus-wise distinction of the VSK disappears when the VSL is symmetric.
In this case, all principal cofactors are identical.
Therefore, the vanishing of one VSK component is equivalent to the vanishing of all VSK components, as well as that of the VSM.


\subsubsection{Case of Nonuniform Time Constants}\label{sec:gentime}

We consider the case of nonuniform time constants, in which $D$ in \eqref{eq:hatA_L} is not the identity matrix.
Define
\[
L^{D}(e^{\star}):=D^{-1} L(e^{\star}).
\]
Because $\mathds{1}_{|\mathds{G}|}$ forms its right kernel, the notions of the VSL, VSK, and VSM can also be generalized to $L^{D}(e^{\star})$.
In particular, the VSK of $L^{D}(e^{\star})$ is obtained as
\begin{equation}\label{eq:mVSK}
\kappa^D (e^{\star}):= \frac{1}{\sfdet(D)} D \kappa (e^{\star})
\end{equation}
where $\kappa (e^{\star})$ denotes the VSK of $L(e^{\star})$.
Similarly, the VSM is modified as
\begin{equation}\label{eq:mVSM}
\kappa^D_{\rm tot} (e^{\star}):= \frac{1}{\sfdet(D)} \sum_{i=1}^{|\mathds{G}|} d_i \kappa_i (e^{\star})
\end{equation}
where $d_i$ is the $i$th diagonal element of $D$.
The vanishing of the modified VSM detects a nontrivial degeneracy of the modified VSL in lossy cases.
It is noteworthy that, despite $D$ being nonsingular, the nontrivial degeneracy of $L(e^{\star})$ and $L^{D}(e^{\star})$ does not necessarily coincide.
In contrast, the vanishing of one VSK component is equivalent to the vanishing of all VSK components and the VSM in lossless cases.
Therefore, the nontrivial degeneracy coincides.

\subsection{Implication to Continuation Power Flow}

We now revisit the CPF calculation discussed in Section~\ref{sec:CPFex} from the viewpoint of the VSK.
The purpose of this subsection is to provide an algebraic interpretation of the nose points in CPF calculations.

Suppose that the slack bus is labeled by $s\in\mathds{G}$.
The reduced Jacobian associated with the fixed slack bus is represented as the principal submatrix $J_{-s}(e^\star)$ obtained by deleting the $s$th row and column from $J(e^\star)$, which is the full Jacobian in the  DAE form.
Therefore, the loss of local regularity with the fixed slack bus is characterized by
\[
\sfdet(J_{-s}(e^\star))=0.
\]
In fact, this is equivalent to
\begin{equation}\label{eq:nosecon}
\kappa_s(e^\star)=0,
\end{equation}
which can be proven using the determinant identity with respect to the Schur complement operation as
\[
\sfdet(J_{-s}(e^\star)) = \sfdet(J_{vv}(e^\star)) 
\underbrace{
\sfdet(L_{-s}(e^\star))
}_{\kappa_s(e^\star)}
,
\]
where the partial Jacobian $J_{vv}(e^\star)$ is nonsingular at the equilibria under consideration.

This observation reveals the algebraic condition that underlies the nose point of a CPF calculation with a fixed slack bus.
The loss of local regularity of the Jacobian obtained by deleting the slack-bus row and column is characterized by \eqref{eq:nosecon}.
This condition differs from the static bifurcation condition, which is characterized by \eqref{eq:stbifcon}.

In lossy power systems, the VSL is generally nonsymmetric and the VSK components are nonuniform.
Therefore, the vanishing of a particular VSK component does not necessarily coincide with the vanishing of the VSM.
Consequently, the nose point of a CPF calculation with a fixed slack bus does not necessarily correspond to a static bifurcation.
In contrast, when the VSL is symmetric, Theorem~\ref{thm:VSK2} implies that all VSK components are identical.
With nonuniform time constants, the modified VSK components are scaled by the positive diagonal elements of $D$, as shown in Section~\ref{sec:gentime}.
Hence, the critical load factors of the CPF nose points and the static bifurcation still coincide in symmetric or lossless cases.

\begin{figure}[t]
\centering
\includegraphics[width = .9\linewidth]{ 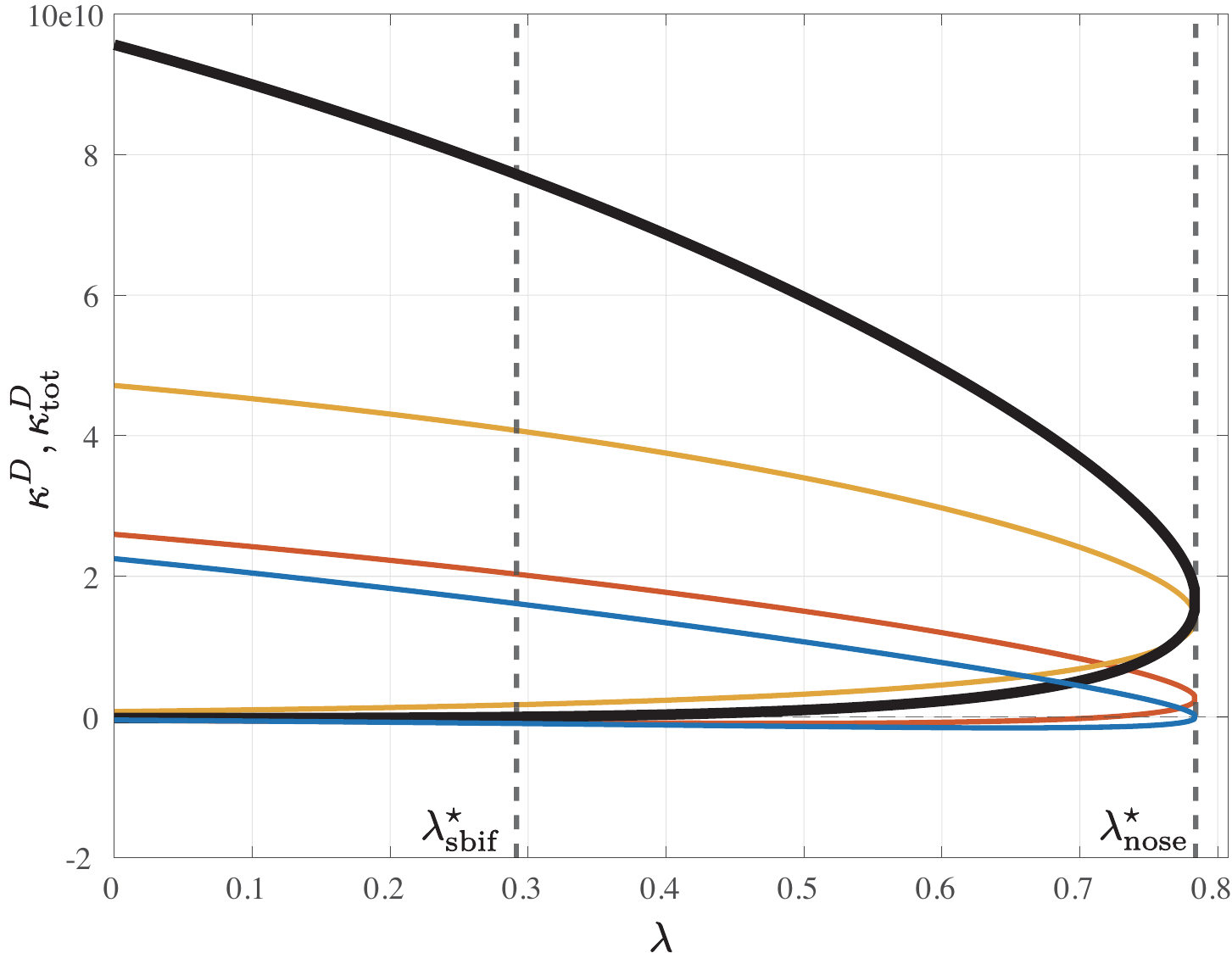}
\medskip
\caption{
Change in VSK components (thin lines) and VSM (black thick line) of lossy power systems.
}
\label{fig:VSKlossy}
\end{figure}

\begin{figure}[t]
\centering
\includegraphics[width = .9\linewidth]{ 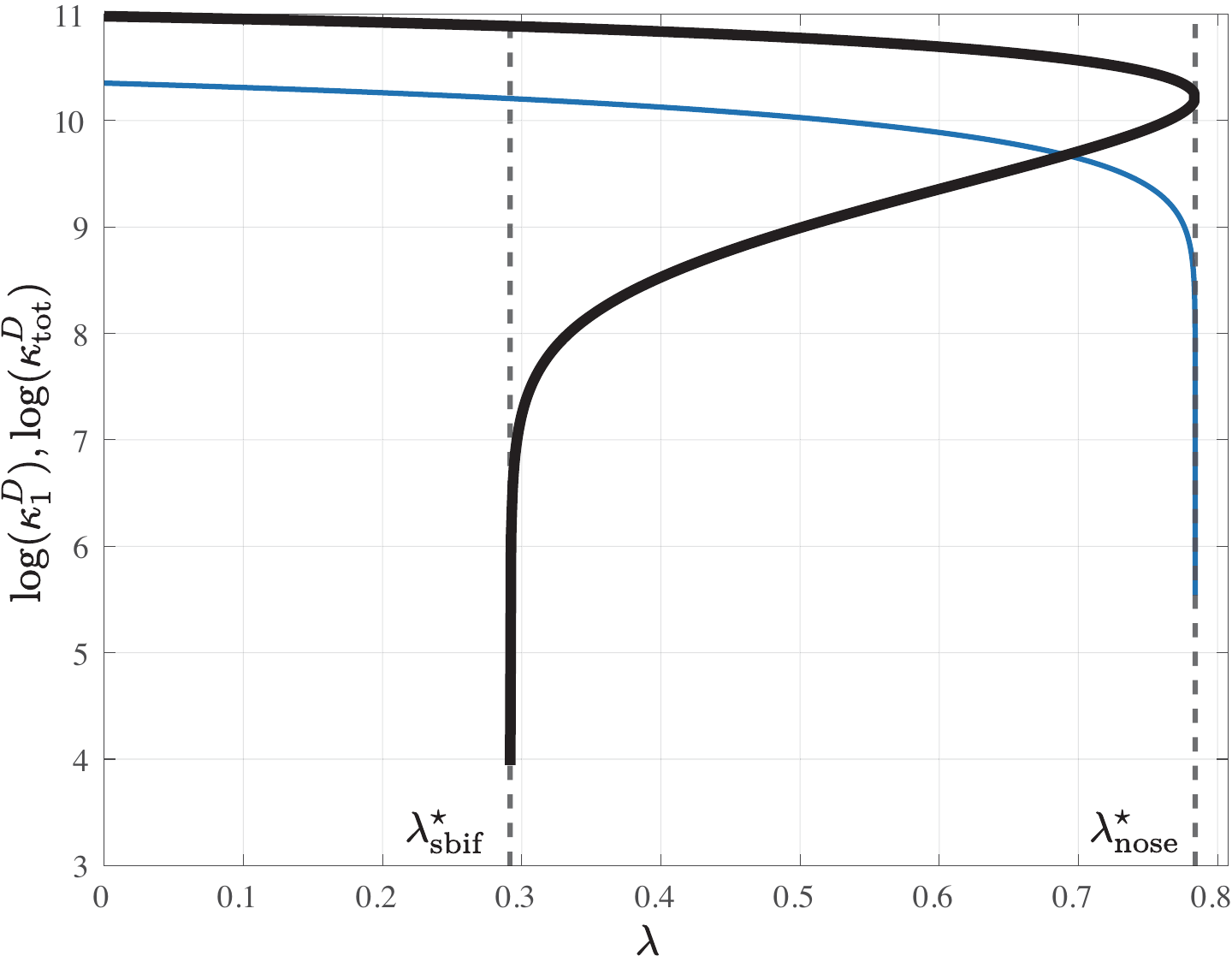}
\medskip
\caption{
Change in VSK component of the slack bus (blue thin line) and VSM (black thick line) of lossy power systems.
}
\label{fig:VSMlossy}
\end{figure}

\section{Numerical Examples}

\subsection{Case of Lossy Power Systems}

We use the same example in Section~\ref{sec:CPFex}.
Consider the modified versions of the VSK and VSM in \eqref{eq:mVSK} and \eqref{eq:mVSM} because the time constants are not uniform in this example.
Fig.~\ref{fig:VSKlossy} shows a plot of the change in the VSK components for each equilibrium obtained by
the CPF calculation.
We can see that the VSK components are not uniform and may have negative values.
Fig.~\ref{fig:VSMlossy} shows the logarithm of the first VSK component, corresponding to the slack bus, and that of the VSM.
The plot shows only the range where those values are positive.
We can see that the vanishing of the first VSK component occurs at the critical load factor $\lambda_{\rm nose}^{\star}$  in \eqref{eq:lamnose}, and that of the VSM occurs at $\lambda_{\rm sbif}^{\star}$  in \eqref{eq:lamsbif}.
These results are consistent with our theory.

\subsection{Case of Lossless Power Systems}

As an example of lossless cases, we consider the case where the phase angle of all transmission line impedances is set to $\pi/2$ while their absolute values remain unchanged.
All other parameter settings are the same.

The results are shown in Figs.~\ref{fig:losslessnose}--\ref{fig:VSMlossless}.
As our theory proves, the vanishing of one VSK component is equivalent to the vanishing of all VSK components and the VSM.
Therefore, the static bifurcation occurs at the critical load factor of the nose points in the CPF calculation.
For reference, the critical load factor is
\[
\lambda_{\rm nose}^{\star} = \lambda_{\rm sbif}^{\star} \simeq 2.605.
\]

\begin{figure}[t]
\centering
\includegraphics[width = .9\linewidth]{ 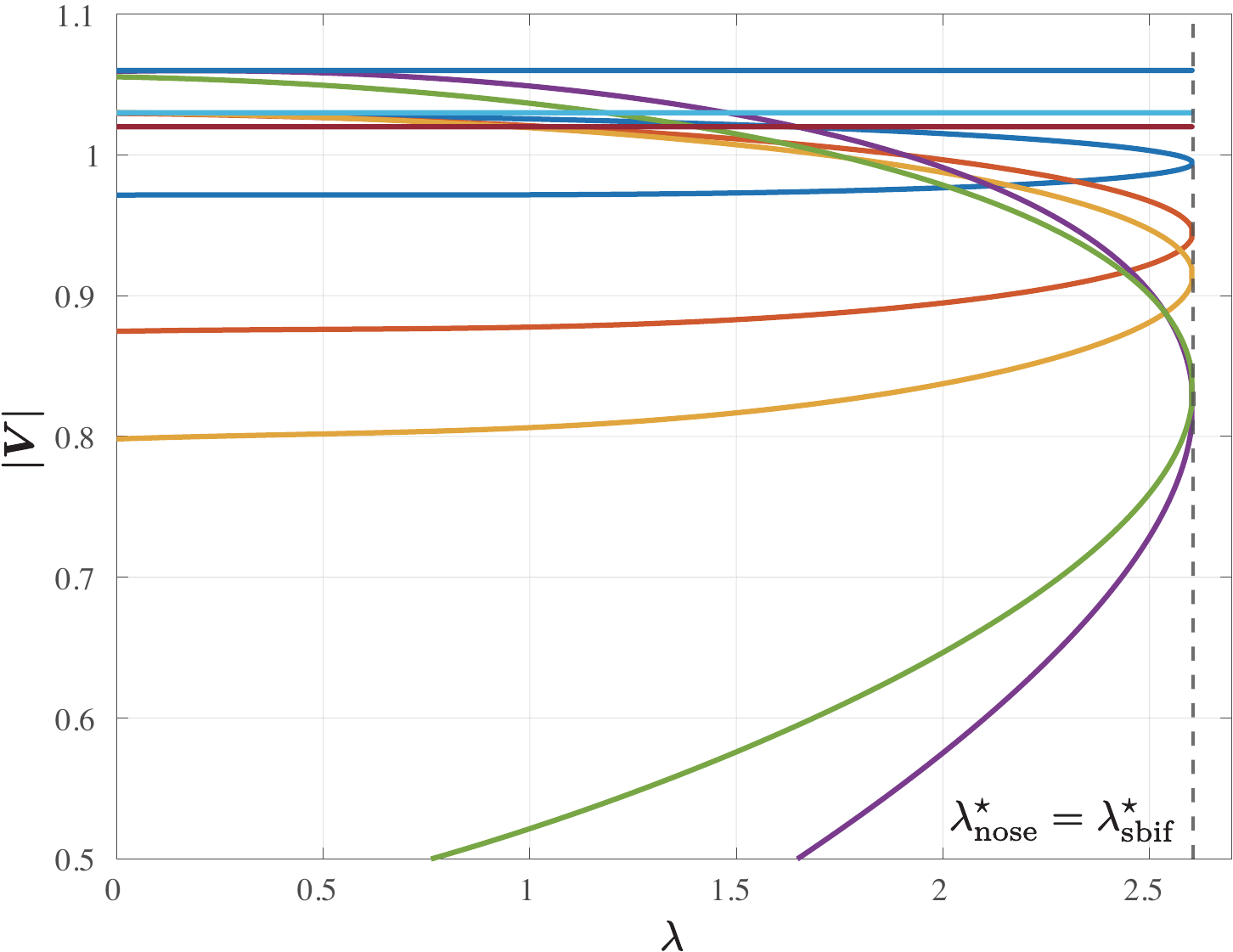}
\medskip
\caption{Nose curves obtained by CPF calculation of lossless power systems.
}
\label{fig:losslessnose}
\end{figure}

\begin{figure}[t]
\centering
\includegraphics[width = .9\linewidth]{ 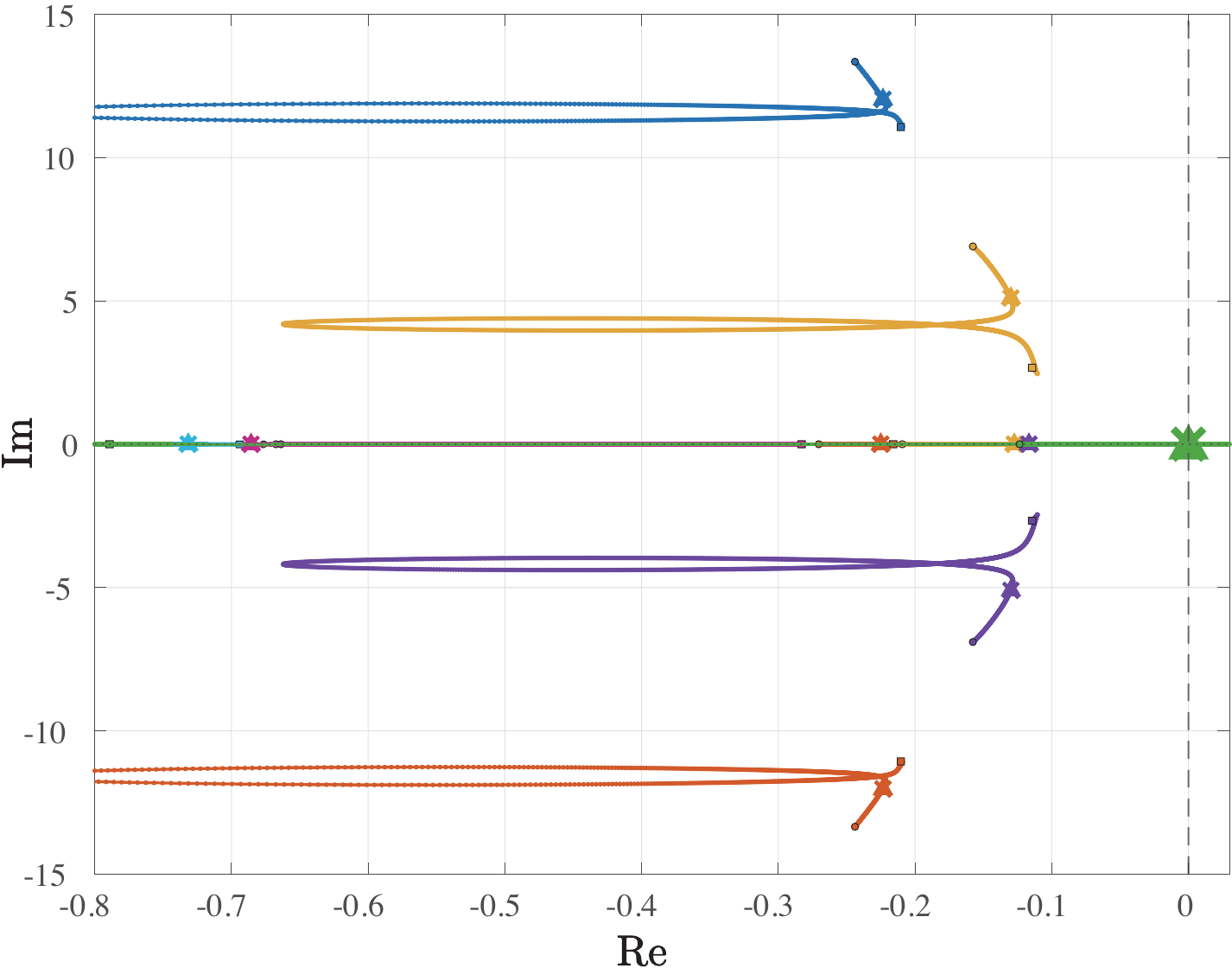}
\medskip
\caption{Change in eigenvalues of lossless power systems with 2-axis generator models.
The cross marks indicate the critical load factor of the nose point.
The triangle marks indicate that of the static bifurcation.
}
\label{fig:eig2axislossless}
\end{figure}

\begin{figure}[t]
\centering
\includegraphics[width = .9\linewidth]{ 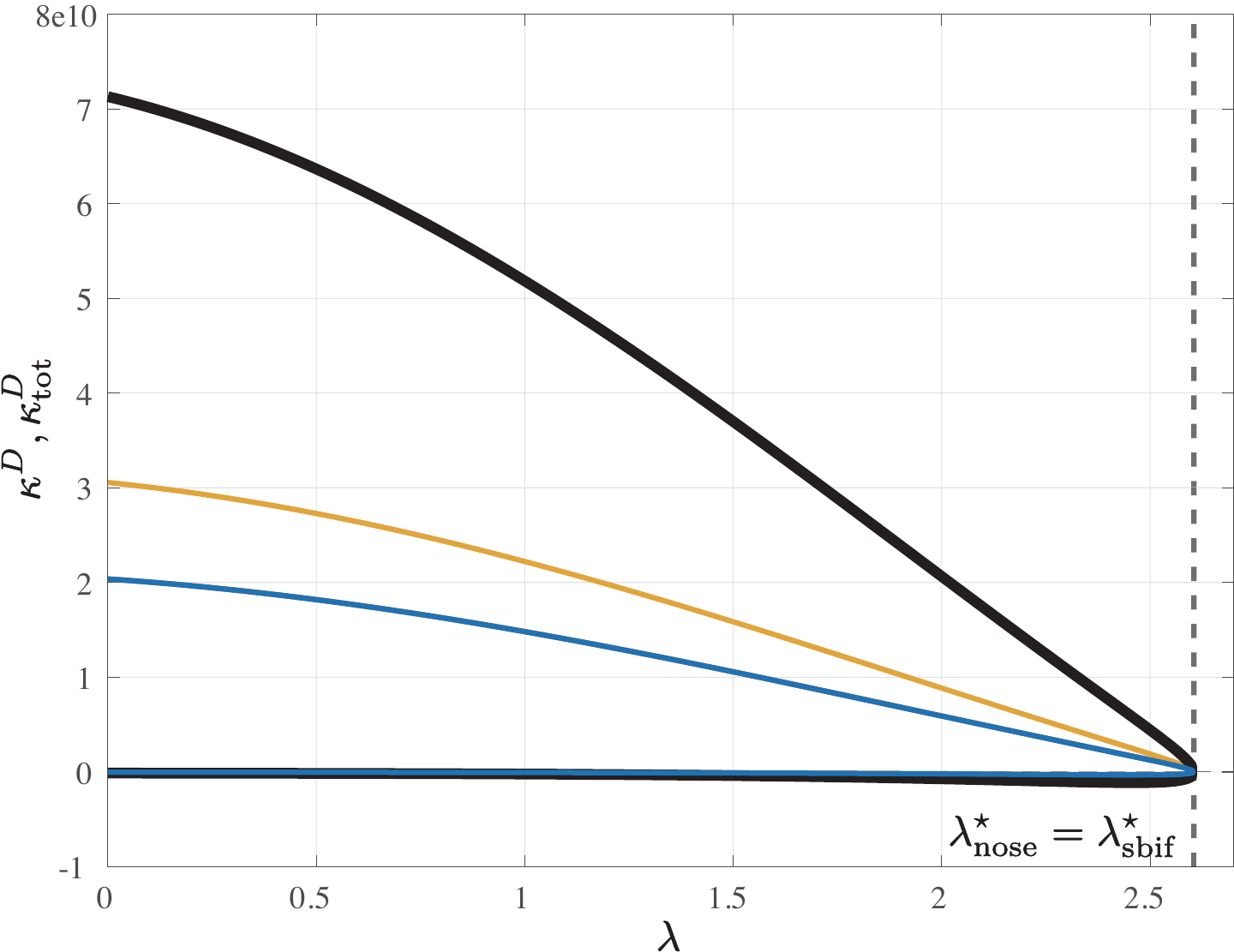}
\medskip
\caption{
Change in VSK components (thin lines) and VSM (black thick line) of lossless power systems.
}
\label{fig:VSKlossless}
\end{figure}

\begin{figure}[t]
\centering
\includegraphics[width = .9\linewidth]{ 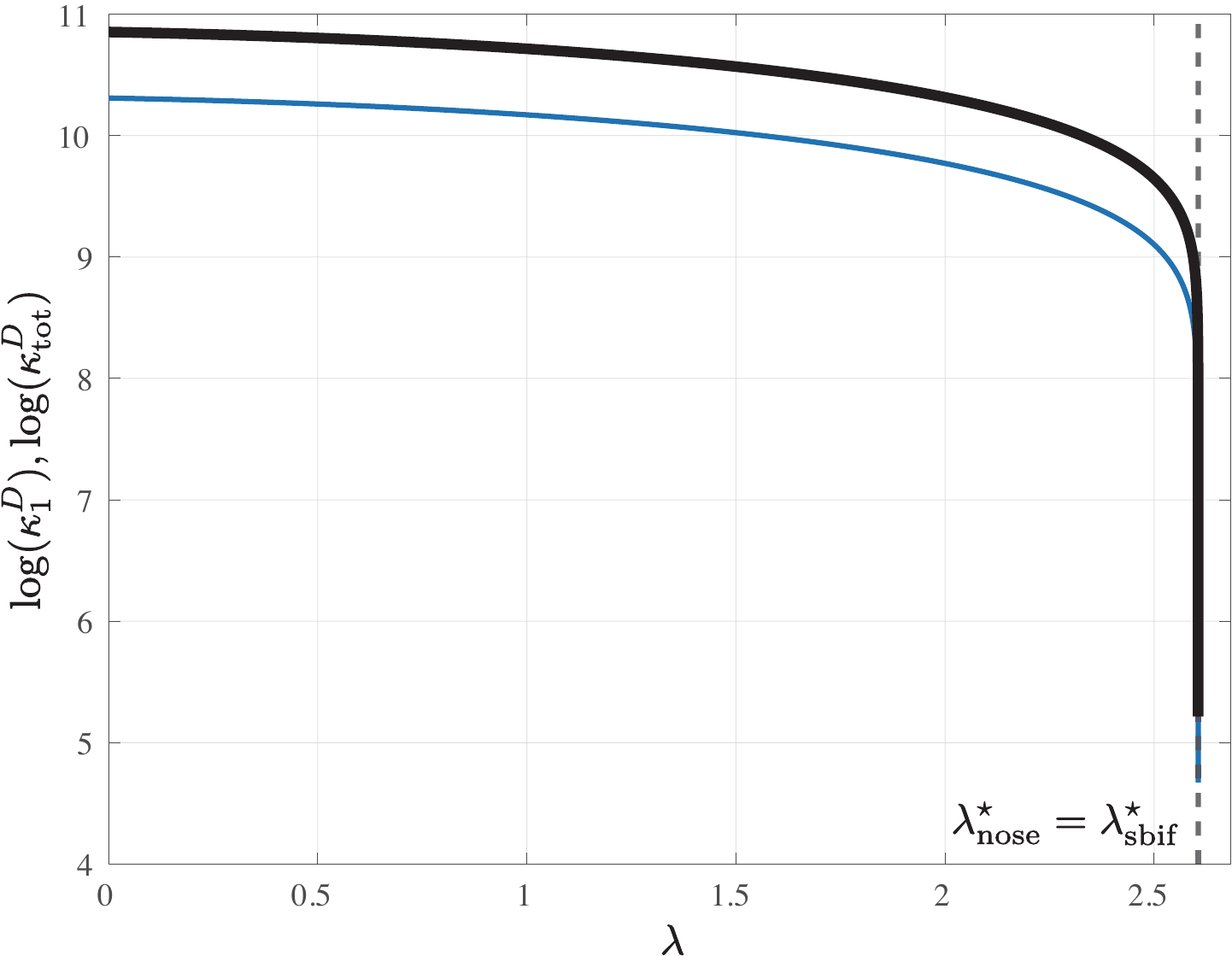}
\medskip
\caption{
Change in VSK component of the slack bus (blue thin line) and VSM (black thick line) of lossless power systems.
}
\label{fig:VSMlossless}
\end{figure}

\section{Concluding Remarks}

This paper introduced the VSK, a cofactor-based representation of voltage stability in lossy power systems.
The VSK is defined as the vector of principal cofactors of the VSL, a reduced Jacobian that retains voltage source internal angles.
We showed that the VSK forms the left kernel of the generally nonsymmetric VSL and that the VSM is decomposed into the sum of all VSK components.
This cofactor structure offers a bus-wise interpretation of voltage stability and explains why a CPF nose point does not necessarily correspond to a static bifurcation in lossy power systems.

Future work will include extending the cofactor theory to hierarchical model reductions.
Specifically, the distribution of the VSM components should be preserved under Schur complement operations.
Similarly, the VSM should have a precise relationship with the unreduced Jacobian through determinant identities.
Using these properties would enable tracking of both bus-wise and system-wide voltage stability information across different levels of reduced models.

The proposed theory is also relevant to next-generation power systems with widespread grid-forming (GFM) voltage sources.
As GFM inverters are deployed in low-voltage layers, voltage formation will not be concentrated only at conventional high-voltage layers.
In this context, the VSK could provide a mathematical foundation for analyzing multi-layer voltage formation and support across transmission, subtransmission, and distribution systems.

\section*{Acknowledgment}

This paper is based on results obtained from a project, JPNP24007, commissioned by the New Energy and Industrial Technology Development Organization (NEDO).

\ifCLASSOPTIONcaptionsoff
  \newpage
\fi

\bibliographystyle{unsrt}
\bibliography{refs}

\end{document}

\vspace{-10mm}

\begin{IEEEbiographynophoto}{Takayuki Ishizaki} 
received his B.Sc., M.Sc., and Ph.D. degrees in Engineering from Tokyo Institute of Technology, Tokyo, Japan, in 2008, 2009, and 2012, respectively.
Since November 2012, he has been with Tokyo Institute of Technology, where he is currently an Associate Professor at Department of Systems and Control Engineering. 
His research interests include network systems control, power systems applications, and distributed time synchronization with atomic ensemble clocks.
He was the recipient of awards including Pioneer Award of Control Division from The Society of Instrument and Control Engineers (SICE) in 2019, IEEE Control Systems Magazine Outstanding Paper Award from IEEE Control Systems Society (IEEE CSS) in 2020, and The Young Scientists' Award of the Commendation for Science and Technology by The Minister of Education, Culture, Sports, Science and Technology (MEXT) in 2021.
\end{IEEEbiographynophoto}

\vspace{-10mm}

\begin{IEEEbiographynophoto}{Taichi Ichimura}
received his Bachelor's degree in Systems and Control Engineering from Tokyo Institute of Technology in 2022. 
Since 2022, he has been with Tokyo Institute of Technology, where he is currently a Master's Student at  Department of Systems and Control Engineering. 
\end{IEEEbiographynophoto}

\vspace{-10mm}

\begin{IEEEbiographynophoto}{Takahiro Kawaguchi}
received the B.Sc., M.Sc., and Ph.D. degrees in engineering from Keio University, Tokyo, Japan, in
2011, 2013, and 2017, respectively.
From 2013 to 2015, he was with the Toshiba Research and Development Center. From 2017 to 2019, he was a Researcher with the Department of Systems and Control Engineering, School of Engineering, Tokyo Institute of Technology, Tokyo. 
From 2019 to 2020, he was a specially appointed Assistant Professor with the Department of Systems and Control Engineering, Tokyo Institute of Technology. He is currently an Assistant Professor with the Division of Electronics and Informatics, Graduate School of Science and Technology, Gunma
University, Gunma, Japan. 
His research interests include system identification
theory and the application of machine learning techniques.
Dr. Kawaguchi is a member of the Society of Instrument and Control Engineers and the Institute of System, Control, and Information Engineers.
\end{IEEEbiographynophoto}

\vspace{-10mm}

\begin{IEEEbiographynophoto}{Yuichiro Yano}
received Ph.D. in engineering from Tokyo Metropolitan University in 2015. 
From April 2014 to March 2016, he was research fellowship for young scientists at Japan Society for the Promotion of Science (JSPS). 
From April 2016, he has worked as tenure-track researcher with National Institute of Information and Communications Technology (NICT), Tokyo, Japan. Since April 2019, he has been a permanent researcher with same institute.
\end{IEEEbiographynophoto}

\vspace{-10mm}

\begin{IEEEbiographynophoto}{Yuko Hanado}
received BS and MS degrees in Science from Tohoku University in 1987 and 1989 respectively, and Ph.D. degrees in Graduated school of Information Systems from the University of Electro-Communication in 2008. 
In 1989 she joined NICT and was the Director General of Electromagnetic Standards Research Center in Radio Research Institute at NICT in 2021 and 2022.
She has been engaged in the work for time and frequency standards, and especially has interests to algorithm of making ensemble atomic timescale. 
She was the recipient of Award of the Commendation for Science and Technology by The Minister of Education, Culture, Sports, Science and Technology (MEXT) in 2013. 
\end{IEEEbiographynophoto}

\end{document}